\definecolor{darkblue}{rgb}{0.0,0,0.7} 
\definecolor{darkred}{rgb}{0.7,0,0} 
\newcommand{\defn}[1]{{\color{darkred}\emph{#1}}} 
\newtheorem{theorem}{Theorem}[section]
\newtheorem{lemma}[theorem]{Lemma}
\newtheorem{proposition}[theorem]{Proposition}
\newtheorem{corollary}[theorem]{Corollary}
\theoremstyle{definition}
\newtheorem{definition}[theorem]{Definition}
\newtheorem{example}[theorem]{Example}
\theoremstyle{remark}
\newtheorem{remark}[theorem]{Remark}
\numberwithin{equation}{section}
\newcommand{\SSKT}{\ensuremath\mathrm{SSKT}}
\newcommand{\SSYT}{\ensuremath\mathrm{SSYT}}
\newcommand{\RF}{\ensuremath\mathrm{RF}}
\newcommand{\RFC}{\ensuremath\mathrm{RFC}}
\newcommand{\wt}{\ensuremath\mathrm{wt}}
\newcommand{\schubert}{\ensuremath\mathfrak{S}}
\newcommand{\key}{\ensuremath\kappa}
\newlength\cellsize \setlength\cellsize{12\unitlength}
\newcommand\cellify[1]{\def\thearg{#1}\def\nothing{}%
\ifx\thearg\nothing
\vrule width0pt height\cellsize depth0pt\else
\hbox to 0pt{\usebox2\hss}\fi%
\vbox to 12\unitlength{
\vss
\hbox to 12\unitlength{\hss$#1$\hss}
\vss}}
\newcommand\tableau[1]{\vtop{\let\\=\cr
\setlength\baselineskip{-12000pt}
\setlength\lineskiplimit{12000pt}
\setlength\lineskip{0pt}
\halign{&\cellify{##}\cr#1\crcr}}}
\newcommand{\cir}[1]{\def\thearg{#1}\def\nothing{}%
\ifx\thearg\nothing\vrule width0pt height\cellsize depth0pt%
  \else\hbox to 0pt{\usebox4\hss}\fi%
  \vbox to 12\unitlength{\vss\hbox to 12\unitlength{\hss$#1$\hss}\vss}}
\begin{document}


\title[Demazure crystals for Schubert polynomials]{A Demazure crystal construction for Schubert polynomials}  

\author[S. Assaf]{Sami Assaf}
\address{Department of Mathematics, University of Southern California, 3620 S. Vermont Ave., Los Angeles, CA 90089-2532, U.S.A.}
\email{shassaf@usc.edu}

\author[A. Schilling]{Anne Schilling}
\address{Department of Mathematics, UC Davis, One Shields Ave., Davis, CA 95616-8633, U.S.A.}
\email{anne@math.ucdavis.edu}

\subjclass[2010]{Primary 14N15, 05E10; Secondary 05A05, 05E05, 05E18, 20G42}



\keywords{Schubert polynomials, Demazure characters, Stanley symmetric functions, crystal bases}

\begin{abstract}
Stanley symmetric functions are the stable limits of Schubert polynomials. In this paper, we show that, conversely, Schubert polynomials are Demazure truncations of Stanley symmetric functions. This parallels the relationship between Schur functions and Demazure characters for the general linear group. We establish this connection by imposing a Demazure crystal structure on key tableaux, recently introduced by the first author in connection with Demazure characters and Schubert polynomials, and linking this to the type A crystal structure on reduced word factorizations, recently introduced by Morse and the second author in connection with Stanley symmetric functions. 
\end{abstract}

\maketitle

%
\section{Introduction}
%
\label{sec:introduction}

Schubert polynomials $\schubert_w$ were first introduced by Bernstein et al.~\cite{BGG.1973} as certain polynomial 
representatives of cohomology classes of Schubert cycles $X_w$ in flag varieties. They were extensively studied by 
Lascoux and Sch\"utzenberger~\cite{LS.1982} using an explicit definition in terms of difference operators $\partial_w$.
 Subsequently, a combinatorial expression for Schubert polynomials as the generating polynomial for compatible sequences 
 for reduced expressions of a permutation $w$ was discovered by Billey, Jockusch, and Stanley~\cite{BJS93}. In the special 
 case of the Grassmannian subvariety, Schubert polynomials are Schur polynomials, which also arise as the irreducible 
 characters for the general linear group.

The Stanley symmetric functions $F_w$ were introduced by Stanley~\cite{Stanley.1984} in the pursuit of enumerations of 
the reduced expressions of permutations, in particular of the long permutation $w_0$. They are defined combinatorially as 
the generating functions of reduced factorizations of permutations. Stanley symmetric functions are the stable limit of 
Schubert polynomials~\cite{Mac91,Macdonald.1991}, precisely
\begin{equation}
  F_{w}(x_1,x_2,\ldots) = \lim _{m\to \infty } \mathfrak{S}_{1^{m}\times w} (x_1,x_2,\ldots,x_{n+m}).
  \label{e:stable}
\end{equation}

Edelman and Greene~\cite{EG.1987} showed that the coefficients of the Schur expansion of Stanley symmetric functions 
are nonnegative integer coefficients. 

Demazure modules for the general linear group \cite{Demazure.1974} are closely related to Schubert classes for 
the cohomology of the flag manifold. In certain cases these modules are irreducible polynomial representations, and 
so the Demazure characters also contain the Schur polynomials as a special case. Lascoux and 
Sch\"utzenberger~\cite{LS.1985} stated that Schubert polynomials are nonnegative sums of Demazure characters.
This was proven by Reiner and Shimozono~\cite{ReinerShimozono.1995} using the right keys associated to 
Edelman--Greene insertion.
Using a key tableaux interpretation for Demazure characters~\cite{Ass-W}, Assaf~\cite{Ass-R} showed that the 
Edelman and Greene algorithm giving the Schur expansion of a Stanley symmetric function can be modified to 
a weak Edelman--Greene algorithm which gives the Demazure expansion of a Schubert polynomial.

In this paper, we deepen this connection and provide a converse to \eqref{e:stable} by showing that Schubert polynomials 
are Demazure truncations of Stanley symmetric functions. Specifically, we show in Theorem~\ref{theorem.main} that the 
combinatorial objects underlying the Schubert polynomials, namely the compatible sequences, exhibit a Demazure crystal 
truncation of the full Stanley crystal of Morse and Schilling~\cite{MS16}. We prove this using 
Theorem~\ref{theorem.key demazure}, in which we give an explicit Demazure crystal structure on semi-standard key tableaux, 
which coincide with semi-skyline augmented fillings of Mason~\cite{Mason.2009}. This, together with Theorem~\ref{theorem.weak EG}, 
in which we show that the crystal operators on reduced 
factorizations intertwine with (weak) Edelman--Greene insertion, proves our main result.

Lenart~\cite{Lenart.2004} defined crystal operators on RC graphs \cite{BergeronBilley.1993}, which are closely 
related to compatible sequences, though it was not observed there that this structure is a Demazure crystal. 
Earlier, Reiner and Shimozono~\cite{ReinerShimozono.1995a} defined $r$-pairings on factorized row-frank words that 
can now be interpreted as crystal operators, but again, this was not observed nor was it noted that this structure is a 
Demazure crystal structure. One could complete either of these perspectives to prove our main result, though we prefer 
the key tableaux approach given its simplicity, the natural crystal operators on these objects, and the connection with 
Edelman--Greene insertion.

This paper is structured as follows. In Section~\ref{sec:Tab}, we review the crystal structure on semi-standard
Young tableaux and define Demazure crystals. In Section~\ref{sec:Key}, we introduce new crystal operators
on key tableaux and prove that this amounts to a Demazure crystal (Theorem~\ref{theorem.key demazure}).
Section~\ref{sec:stanley} is reserved for the review of Stanley symmetric functions, Edelman--Greene insertion and
the crystal structure on reduced factorization, which underly the Stanley symmetric functions.
Section~\ref{sec:RCF} contains our main result (Theorem~\ref{theorem.main}), namely a Demazure crystal structure 
on reduced factorizations with cutoff, which are equivalent to compatible sequences. This gives a Demazure
crystal structure for Schubert polynomials and shows that Schubert polynomials are a Demazure truncation of
Stanley symmetric functions.

\subsection*{Acknowledgments}
AS was partially supported by NSF grant  DMS--1500050.
The authors are grateful to Per Alexandersson, Sara Billey, Jim Haglund, Cristian Lenart, Sarah Mason, Liz Milicevic, Jennifer Morse, Vic Reiner, Mark Shimozono, and Alex Yong for helpful discussions and comments on this topic.
AS would also like to thank the University of Southern California for their hospitality during her talk in March
2017 and the AWM Research Symposium at UCLA in April 2017, where this work started.

%
\section{Crystal structure on tableaux}
%
\label{sec:Tab}

We begin in Section~\ref{sec:schur} by reviewing the basics of Schur polynomials via the combinatorics of Young 
tableaux. In Section~\ref{sec:SSYT}, we review the type $A$ crystal structure on semi-standard Young tableaux, and 
conclude in Section~\ref{sec:demazure crystal} with the definition of Demazure crystals.

%
\subsection{Combinatorics of Schur polynomials}
\label{sec:schur}

Given a partition $\lambda$, the \defn{Young diagram of shape $\lambda$} is the array of left-justified cells with 
$\lambda_i$ boxes in row $i$. Here we use French notation, where the rows weakly decrease in size from bottom 
to top in the Young diagram. A \defn{Young tableau} is a filling of the cells of a Young diagram from some totally 
ordered alphabet (for example the set of positive integers) such that rows and columns weakly increase. A 
\defn{semi-standard Young tableau} is a Young tableau with distinct column entries. Figure~\ref{fig:SSYT}
provides an example of semi-standard Young tableaux of a fixed shape.

\begin{figure}[ht]
  \begin{displaymath}
    \begin{array}{c@{\hskip 2\cellsize}c@{\hskip 2\cellsize}c@{\hskip 2\cellsize}c@{\hskip 2\cellsize}c@{\hskip 2\cellsize}c@{\hskip 2\cellsize}c@{\hskip 2\cellsize}c}
      \tableau{3 \\ 2 & 3 \\ 1 & 1} &
      \tableau{3 \\ 2 & 4 \\ 1 & 1} &
      \tableau{4 \\ 2 & 4 \\ 1 & 1} &
      \tableau{4 \\ 3 & 4 \\ 1 & 1} &
      \tableau{4 \\ 3 & 3 \\ 1 & 1} &
      \tableau{3 \\ 2 & 3 \\ 1 & 2} &
      \tableau{3 \\ 2 & 4 \\ 1 & 2} &
      \tableau{4 \\ 2 & 4 \\ 1 & 2} \\ \\
      \tableau{4 \\ 3 & 4 \\ 1 & 2} &
      \tableau{4 \\ 3 & 3 \\ 1 & 2} &
      \tableau{3 \\ 2 & 4 \\ 1 & 3} &
      \tableau{4 \\ 2 & 4 \\ 1 & 3} &
      \tableau{4 \\ 3 & 4 \\ 1 & 3} &
      \tableau{4 \\ 3 & 4 \\ 2 & 2} &
      \tableau{4 \\ 3 & 3 \\ 2 & 2} &
      \tableau{4 \\ 3 & 4 \\ 2 & 3} \\ \\
      \tableau{4 \\ 2 & 3 \\ 1 & 2} &
      \tableau{4 \\ 2 & 3 \\ 1 & 1} &
      \tableau{4 \\ 2 & 2 \\ 1 & 1} &
      \tableau{3 \\ 2 & 2 \\ 1 & 1} & & & &
    \end{array}
  \end{displaymath}
  \caption{\label{fig:SSYT}The semi-standard Young tableaux of shape $(2,2,1)$ over the alphabet $\{1,2,3,4\}$.}
\end{figure}

The \defn{weight} of a semi-standard Young tableau $T$, denoted by $\wt(T)$, is the weak composition 
whose $i$th part is the number of occurrences of $i$ in $T$. The shape $\lambda$ of $T$ is also denoted by
$\operatorname{sh}(T)$.

\begin{definition}
  The \defn{Schur polynomial} in $n$ variables indexed by the partition $\lambda$ is
  \begin{equation}
    s_\lambda(x) = s_{\lambda}(x_1,\ldots,x_n) = \sum_{T \in \SSYT_n(\lambda)} x_1^{\wt(T)_1} \cdots x_n^{\wt(T)_n},
  \end{equation}
  where $\SSYT_n(\lambda)$ is the set of semi-standard Young tableaux of shape $\lambda$ over
  the alphabet $\{1,2,\ldots,n\}$.
  \label{def:schur}
\end{definition}

Schur polynomials arise as characters for irreducible highest weight modules for the general linear group with 
semi-standard Young tableaux giving a natural indexing set for the basis of the module.

%
\subsection{Crystal operators on semi-standard Young tableaux}
\label{sec:SSYT}

A crystal graph is a directed, colored graph with vertex set given by the crystal basis and directed edges given 
by deformations of the Chevalley generators. For the quantum group $U_{q}(\mathfrak{sl}_n)$, the crystal basis 
can be indexed by semi-standard Young tableaux over the alphabet $A=\{1,2,\ldots,n\}$ and there is an explicit 
combinatorial construction of the crystal graph on tableaux~\cite{KN94,Lit95}. For an introduction to crystals
from the quantum group perspective, see~\cite{HongKang.2002}. For a purely combinatorial introduction to 
crystals, see \cite{Bump.Schilling.2017}.

For a word $w$ of length $k$ with letters from the alphabet $A=\{1,2,\ldots,n\}$, an integer $0 \leqslant r \leqslant k$, 
and an integer $1\leqslant i<n$, define
\begin{equation}
  M_i(w,r) = \wt(w_{1} w_{2} \cdots w_{r})_{i} - \wt(w_{1} w_{2} \cdots w_{r})_{i+1},
\end{equation}
where $\wt(w)$ is the weak composition whose $j$th part is the number of $j$'s in $w$. Set 
$M_i(w) = \max_{r\geqslant 0}\{M_i(w,r)\}$. Observe that if $M_i(w) > 0$ and $p$ is the leftmost occurrence of this 
maximum, then $w_p = i$, and if $q$ is the rightmost occurrence of this maximum, then either $q=k$ or $w_{q+1} = i+1$.

For a Young tableau $T$, the \defn{column reading word of $T$}, denoted by $w(T)$, is the word obtained by 
reading the entries of $T$ down columns from left to right. For example, the column reading word of the leftmost 
Young tableau in the top row of Figure~\ref{fig:SSYT} is $32131$.

\begin{definition}
  Given an integer $1\leqslant i<n$, define the \defn{lowering operator} $f_i$ on semi-standard Young tableaux over
  the alphabet $A$ as follows: 
  if $M_i(w(T)) \leqslant 0$, then $f_i(T)=0$; otherwise, let $p$ be the smallest index such that $M_i(w(T),p) = M_i(w(T))$, 
  and $f_i(T)$ changes the entry in $T$ corresponding to $w(T)_p$ to $i+1$.
  \label{def:young-lower}
\end{definition}

An example of the lowering operator $f_2$ is given in Figure~\ref{fig:young-lower}. For this example, the column reading 
word is given below each semi-standard Young tableau with the largest index that attains $M_2(w(T))>0$ underlined 
and the corresponding entry in the tableau circled.

\begin{figure}[ht]
  \begin{displaymath}
    \begin{array}{c@{\hskip 2\cellsize}c@{\hskip 2\cellsize}c@{\hskip 2\cellsize}c@{\hskip 2\cellsize}c}
      \rnode{A}{\tableau{2 & 3 & 3 \\ 1 & 2 & 2 & 2 & \cir{2} \\ & }} &
      \rnode{B}{\tableau{2 & 3 & 3 \\ 1 & 2 & 2 & \cir{2} & 3 \\ & }} &
      \rnode{C}{\tableau{\cir{2} & 3 & 3 \\ 1 & 2 & 2 & 3 & 3 \\ & }} &
      \rnode{D}{\tableau{3 & 3 & 3 \\ 1 & 2 & 2 & 3 & 3 \\ & }} &
      \rnode{E}{\raisebox{-\cellsize}{$0$}} \\
      2132322\underline{2} &
      213232\underline{2}3 &
      \underline{2}1323233 &       
      31323233 &       
    \end{array}
    \psset{nodesep=5pt,linewidth=.1ex}
    \ncline[linecolor=blue]{->} {A}{B} \naput{f_2}
    \ncline[linecolor=blue]{->} {B}{C} \naput{f_2}
    \ncline[linecolor=blue]{->} {C}{D} \naput{f_2}
    \ncline[linecolor=blue]{->} {D}{E} \naput{f_2}
  \end{displaymath}
  \caption{\label{fig:young-lower}An example of the lowering operator $f_2$ on semi-standard Young tableaux.}
\end{figure}

\begin{definition}
  Given an integer $1\leqslant i<n$, define the \defn{raising operator} $e_i$ on semi-standard Young tableaux over
  the alphabet $A$ as follows: 
  let $q$ be the largest index such that $M_i(w(T),q) = M_i(w(T))$. If $q$ is the length of $w(T)$, then $e_i(T)=0$; 
  otherwise, $e_i(T)$ changes the entry in $T$ corresponding to $w(T)_{q+1}$ to $i$.
  \label{def:young-raise}
\end{definition}

For further examples of raising and lowering operators on semi-standard Young tableaux, see Figure~\ref{fig:Young-221}. 
Note that we have drawn the crystal in Figure~\ref{fig:Young-221} with lowering operators pointing upward to facilitate 
the bijection with semi-standard key tableaux as explained in Section~\ref{sec:Key}.

\begin{figure}[ht]
  \begin{displaymath}
    \begin{array}{cccccc}
      & & \rnode{b1}{\tableau{4 \\ 3 & 4 \\ 2 & 3}} & & & \\[3\cellsize]
      & & \rnode{b2}{\tableau{4 \\ 3 & 4 \\ 2 & 2}} & & \rnode{c2}{\tableau{4 \\ 3 & 4 \\ 1 & 3}} & \\[3\cellsize]
      \rnode{a3}{\tableau{4 \\ 3 & 3 \\ 2 & 2}} & & & \rnode{c3}{\tableau{4 \\ 3 & 4 \\ 1 & 2}} & \rnode{C3}{\tableau{4 \\ 2 & 4 \\ 1 & 3}} & \\[3\cellsize]
      & \rnode{b4}{\tableau{4 \\ 3 & 3 \\ 1 & 2}} & \rnode{B4}{\tableau{3 \\ 2 & 4 \\ 1 & 3}} & & \rnode{c4}{\tableau{4 \\ 2 & 4 \\ 1 & 2}} & \rnode{d4}{\tableau{4 \\ 3 & 4 \\ 1 & 1}} \\[3\cellsize]
      & \rnode{b5}{\tableau{4 \\ 2 & 3 \\ 1 & 2}} & \rnode{B5}{\tableau{3 \\ 2 & 4 \\ 1 & 2}} & \rnode{c5}{\tableau{4 \\ 3 & 3 \\ 1 & 1}} & & \rnode{d5}{\tableau{4 \\ 2 & 4 \\ 1 & 1}} \\[3\cellsize]
      \rnode{a6}{\tableau{3 \\ 2 & 3 \\ 1 & 2}} & & & \rnode{c6}{\tableau{4 \\ 2 & 3 \\ 1 & 1}} & \rnode{C6}{\tableau{3 \\ 2 & 4 \\ 1 & 1}} & \\[3\cellsize]
      & & \rnode{B7}{\tableau{3 \\ 2 & 3 \\ 1 & 1 }} & \rnode{c7}{\tableau{4 \\ 2 & 2 \\ 1 & 1 }} & & \\[3\cellsize]
      & & \rnode{B8}{\tableau{3 \\ 2 & 2 \\ 1 & 1}} & & & 
    \end{array}
    \psset{nodesep=2pt,linewidth=.1ex}
    \ncline[linecolor=red]{<-}  {b1}{b2} 
    \ncline[linecolor=green]{<-}{b1}{c2} 
    \ncline[linecolor=blue]{<-} {b2}{a3} 
    \ncline[linecolor=red]{<-}  {c2}{C3} 
    \ncline[linecolor=green]{<-}{b2}{c3} 
    \ncline[linecolor=blue]{<-} {c3}{b4} 
    \ncline[linecolor=blue]{<-} {C3}{B4} 
    \ncline[linecolor=red]{<-}  {C3}{c4} 
    \ncline[linecolor=green]{<-}{a3}{b4} 
    \ncline[linecolor=green]{<-}{c3}{d4} 
    \ncline[linecolor=blue]{<-} {c4}{B5} 
    \ncline[linecolor=blue]{<-} {d4}{c5} 
    \ncline[linecolor=red]{<-}  {b4}{b5} 
    \ncline[linecolor=red]{<-}  {B4}{B5} 
    \ncline[linecolor=red]{<-}  {d4}{d5} 
    \ncline[linecolor=green]{<-}{b4}{c5} 
    \ncline[linecolor=green]{<-}{c4}{d5} 
    \ncline[linecolor=green]{<-}{b5}{c6} 
    \ncline[linecolor=blue]{<-} {B5}{a6} 
    \ncline[linecolor=red]{<-}  {c5}{c6} 
    \ncline[linecolor=blue]{<-} {d5}{C6} 
    \ncline[linecolor=green]{<-}{B5}{C6} 
    \ncline[linecolor=red]{<-}  {c6}{c7} 
    \ncline[linecolor=blue]{<-} {C6}{B7} 
    \ncline[linecolor=green]{<-}{a6}{B7} 
    \ncline[linecolor=red]{<-}  {B7}{B8} 
    \ncline[linecolor=blue]{<-} {c7}{B8} 
  \end{displaymath}
  \caption{\label{fig:Young-221}The crystal $B(2,2,1)$, with edges $f_1 \color{green}\nwarrow$, 
  $f_2 \color{red}\uparrow$, $f_3 \color{blue}\nearrow$.}
\end{figure}

For a partition $\lambda$, we may define the highest weight crystal (of type $A_n$) of highest weight $\lambda$,
denoted $B(\lambda)$, as the set $\SSYT_n(\lambda)$ together with the operators $f_i,e_i$ for $1\leqslant i<n$
and the weight function $\wt$. The character of a crystal is defined as
\[
	\operatorname{ch} B(\lambda) = \sum_{b\in B(\lambda)} x_1^{\wt(b)_1} \cdots x_n^{\wt(b)_n},
\]
which in this case is precisely the Schur polynomial $s_\lambda(x_1,\ldots,x_n)$.

%
\subsection{Demazure crystals}
\label{sec:demazure crystal}

Demazure characters first arose in connection with Schubert classes for the cohomology of the flag manifold 
in~\cite{Demazure.1974}.

The divided difference operators $\partial_i$ for $1\leqslant i <n$ act on polynomials by
\[
	\partial_i f(x_1,\ldots,x_n) = \frac{f(x_1,\ldots,x_i,x_{i+1}, \ldots,x_n) - f(x_1,\ldots,x_{i+1},x_i, \ldots,x_n)}
	{x_i-x_{i+1}}.
\]
For $w\in S_n$, we may define $\partial_w = \partial_{i_1} \partial_{i_2} \cdots \partial_{i_k}$ if 
$w = s_{i_1}s_{i_2} \cdots s_{i_k}$. Here $s_i$ ($1\leqslant i<n$) is the simple transposition interchanging $i$ and 
$i+1$ and $k$ is the number of inversions (or length) of $w$. When $k$ is the length of $w$, the expression
$s_{i_1}s_{i_2} \cdots s_{i_k}$ for $w$ is called a \defn{reduced expression}.
It can be shown that $\partial_w$ is independent of the choice of reduced expression.

There exist degree-preserving divided difference operators $\pi_i$ for $1 \leqslant i < n$, which act on polynomials by
\[
	\pi_i f(x_1,\ldots,x_n) = \partial_i \left( x_i f(x_1,\ldots,x_n) \right).
\]
As with $\partial_i$, we extend this definition to $w\in S_n$, by $\pi_w = \pi_{i_1} \pi_{i_2} \cdots \pi_{i_k}$ if 
$w = s_{i_1}s_{i_2} \cdots s_{i_k}$ is a reduced expression, and $\pi_w$ is independent of the choice of reduced expression.

\begin{definition}
  Given a weak composition $a$ of length $n$, the \defn{Demazure character} $\key_a$ is defined as
  \begin{equation}
    \key_a(x) = \key_a(x_1,\ldots,x_n) = \pi_{w} \left( x_1^{\lambda_1} x_2^{\lambda_2} \cdots x_n^{\lambda_n} \right),
  \end{equation}
  where $\lambda$ is the partition rearrangement of $a$ and $w$ is the shortest permutation that sorts $a$ to $\lambda$.
  \label{def:key}
\end{definition}

For example, we may compute the Demazure character $\key_{(0,2,1,2)}$ by taking $a = (0,2,1,2)$, $\lambda = (2,2,1,0)$ 
and $w = 2431$, and so we have
\begin{eqnarray*}
  \key_{(0,2,1,2)} & = & \pi_1 \pi_3 \pi_2 \pi_3 \left(x_1^2 x_2^2 x_3\right) \\
  & = & \pi_1 \pi_3 \pi_2 \left( x_1^2 x_2^2 x_3 + x_1^2 x_2^2 x_4 \right) \\
  & = & \pi_1 \pi_3 \left( x_1^2 x_2^2 x_3 + x_1^2 x_2^2 x_4 + x_1^2 x_2 x_3^2 + x_1^2 x_2 x_3 x_4 + x_1^2 x_3^2 x_4 
  \right) \\
  & = & \pi_1 \left( x_1^2 x_2^2 x_3 + x_1^2 x_2^2 x_4 + x_1^2 x_2 x_3^2 + 2 x_1^2 x_2 x_3 x_4 + x_1^2 x_2 x_4^2 
  + x_1^2 x_3^2 x_4 + x_1^2 x_3 x_4^2 \right) \\
  & = & x_1^2 x_2^2 x_3 + x_1^2 x_2^2 x_4 + x_1^2 x_2 x_3^2 + 2 x_1^2 x_2 x_3 x_4 + x_1^2 x_2 x_4^2 
  + x_1^2 x_3^2 x_4 + x_1^2 x_3 x_4^2 + x_1 x_2^2 x_3^2 \\
  & & + 2 x_1 x_2^2 x_3 x_4 + x_1 x_2^2 x_4^2 + x_1 x_2 x_3^2 x_4 + x_1 x_2 x_3 x_4^2 + x_2^2 x_3^2 x_4 
  + x_2^2 x_3 x_4^2.
\end{eqnarray*}

Macdonald~\cite{Mac91,Macdonald.1991} showed that when $a$ is weakly increasing of length $n$, we have
\[
	\key_a(x_1,\ldots,x_n) = s_{\mathrm{rev}(a)} (x_1,\ldots,x_n),
\]
where $\mathrm{rev}(a)$ is the partition obtained by reversing (equivalently, sorting) $a$. In particular, Demazure 
characters are a polynomial generalization of irreducible characters.

Making this more precise, Demazure crystals are certain subsets of $B(\lambda)$, which were first conjectured
by Littelmann~\cite{Lit95} to generalize the Demazure characters. This conjecture was later proven
by Kashiwara~\cite{Kashiwara.1993}. Given a subset $X \subseteq B(\lambda)$, we define
$\mathfrak{D}_i$ for $1\leqslant i <n$ as
\begin{equation}
	\mathfrak{D}_i X = \{ b \in B(\lambda) \mid \text{$e_i^k(b) \in X$ for some $k\geqslant 0$}\}.
\end{equation}
For a permutation $w\in S_n$ with reduced expression $w=s_{i_1} s_{i_2} \cdots s_{i_k}$, we define
\begin{equation}
	B_w(\lambda) = \mathfrak{D}_{i_1} \mathfrak{D}_{i_2} \cdots \mathfrak{D}_{i_k} \{ u_\lambda \},
\end{equation}
where $u_\lambda$ is the highest weight element in $B(\lambda)$ satisfying $e_i(u_\lambda)=0$ for all
$1\leqslant i<n$. Whenever $b,b' \in B_w(\lambda) \subseteq B(\lambda)$ and $f_i(b)=b'$ in $B(\lambda)$, then
this crystal operator is also defined in $B_w(\lambda)$.

Let us define the character of a Demazure crystal as
\[
	\operatorname{ch} B_w(\lambda)  = \sum_{b \in B_w(\lambda)} x_1^{\wt(b)_1} \cdots x_n^{\wt(b)_n}.
\]
It was proven by~\cite{Lit95,Kashiwara.1993} that this character coincides with $\key_a$, where $w \cdot a = \lambda$.

%
\section{Demazure crystal structure on key tableaux}
%
\label{sec:Key}

In Section~\ref{sec:demazure}, we review the combinatorial model of key tableaux \cite{Ass-W} that is central to our results. 
In Section~\ref{sec:SSKT}, we introduce a new crystal structure on semi-standard key tableaux and show that this 
precisely realizes the Demazure character by truncating the crystal structure on semi-standard Young tableaux.

%
\subsection{Combinatorics of Demazure characters}
\label{sec:demazure}

Combinatorial interpretations and definitions for Demazure characters for the general linear group were given by Lascoux 
and Sch\"utzenberger~\cite{LS.1990}, Kohnert \cite{Koh91}, Reiner and Shimozono~\cite{ReinerShimozono.1995}, 
and Mason~\cite{Mason.2009}, all of whom refer to them as \defn{key polynomials}. We use an equivalent definition in terms 
of semi-standard key tableaux due to Assaf~\cite{Ass-W}, which is combinatorially equivalent to Mason's semi-skyline augmented fillings but which replaces the triple conditions for more direct row and column conditions (see 
also~\cite{Monical.2016}). Generalizing Young diagrams, given a weak composition $a$, the 
\defn{key diagram of shape $a$} is the array of left-justified cells with $a_i$ boxes in row $i$. 

\begin{definition}[\cite{Ass-W}]
  A \defn{key tableau} is a filling of a key diagram with positive integers such that columns have distinct entries, 
  rows weakly decrease, and, if some entry $i$ is above and in the same column as an entry $k$ with $i<k$, then 
  there is an entry immediately right of $k$, say $j$, with $i<j$.
  \label{def:key-tableau}
\end{definition}

For the Schur polynomial case, we restrict entries in the Young tableaux globally allowing entries $1$ through 
$n$ to appear anywhere. In the Demazure case, we must restrict the entries in the key tableaux locally allowing 
entries to appear only in their row and lower.

\begin{definition}[\cite{Ass-H}]
  A \defn{semi-standard key tableau} is a key tableau in which no entry exceeds its row index. 
  \label{def:SSKT}
\end{definition}

\begin{figure}[ht]
  \begin{displaymath}
    \begin{array}{c@{\hskip 2\cellsize}c@{\hskip 2\cellsize}c@{\hskip 2\cellsize}c@{\hskip 2\cellsize}c@{\hskip 2\cellsize}c@{\hskip 2\cellsize}c@{\hskip 2\cellsize}c}
      \vline\tableau{4 & 4 \\ 3 \\ 2 & 2 \\ & } & 
      \vline\tableau{4 & 4 \\ 3 \\ 2 & 1 \\ & } & 
      \vline\tableau{4 & 4 \\ 3 \\ 1 & 1 \\ & } & 
      \vline\tableau{4 & 4 \\ 2 \\ 1 & 1 \\ & } & 
      \vline\tableau{4 & 4 \\ 1 \\ 2 & 2 \\ & } & 
      \vline\tableau{4 & 3 \\ 3 \\ 2 & 2 \\ & } & 
      \vline\tableau{4 & 3 \\ 3 \\ 2 & 1 \\ & } & 
      \vline\tableau{4 & 3 \\ 3 \\ 1 & 1 \\ & } \\ \\
      \vline\tableau{4 & 3 \\ 2 \\ 1 & 1 \\ & } & 
      \vline\tableau{4 & 3 \\ 1 \\ 2 & 2 \\ & } & 
      \vline\tableau{4 & 2 \\ 3 \\ 2 & 1 \\ & } & 
      \vline\tableau{4 & 2 \\ 3 \\ 1 & 1 \\ & } & 
      \vline\tableau{4 & 2 \\ 2 \\ 1 & 1 \\ & } &       
      \vline\tableau{3 & 3 \\ 2 \\ 1 & 1 \\ & } & 
      \vline\tableau{3 & 3 \\ 1 \\ 2 & 2 \\ & } & 
      \vline\tableau{3 & 2 \\ 2 \\ 1 & 1 \\ & } 
    \end{array}
  \end{displaymath}
  \caption{\label{fig:SSKT}The semi-standard key tableaux of shape $(0,2,1,2)$.}
\end{figure}

For examples, see Figure~\ref{fig:SSKT}. The following property of semi-standard key tableaux will be useful.

\begin{lemma}
  Suppose row $r$ of a semi-standard key tableau has two entries $i+1$ in columns $c$ and $c+1$. If there is 
  an $i$ above row $r$ in column $c$, then there cannot be an $i$ below row $r$ in column $c+1$. 
  \label{lem:cross}
\end{lemma}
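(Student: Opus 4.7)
The plan is to argue by contradiction, deriving a violation of the triple condition in Definition~\ref{def:key-tableau}. Let $r'>r$ denote the row containing the $i$ above row $r$ in column $c$, and suppose for contradiction that there is an $i$ at some row $r''<r$ in column $c+1$.

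First, I would examine the cell at row $r''$ column $c$. Since key diagrams are left-justified and row $r''$ contains a cell at column $c+1$, it also contains a cell at column $c$; call its entry $x$. The weakly decreasing row condition gives $x\geqslant i$, while the distinct-column condition combined with the presence of $i$ at $(r',c)$ and $i+1$ at $(r,c)$ rules out $x\in\{i,i+1\}$. Hence $x\geqslant i+2$, and in particular $x>i+1$.

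The key step is then to apply the triple condition to the pair $(i+1, x)$ situated in column $c$: since $r>r''$, the entry $i+1$ at row $r$ lies above the entry $x$ at row $r''$, and $i+1<x$. The condition then demands an entry $j$ immediately to the right of $x$, that is, at row $r''$ column $c+1$, satisfying $j>i+1$. But by the contradiction hypothesis, that entry is $i$, yielding the impossible inequality $i>i+1$.

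The main obstacle is choosing the correct instance of the triple condition to invoke. The hypothesis naturally draws attention to column $c+1$, where the triple condition gives nothing useful because $i+1$ sits \emph{above} $i$ rather than below it. The usable constraint instead comes from column $c$, once one observes that left-justification forces a third entry $x$ at row $r''$, and that the distinct-column condition pins $x$ above $i+1$.
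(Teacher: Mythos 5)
Your proof is correct and is essentially the paper's own argument: both identify the entry at $(r'',c)$, force it above $i+1$ via weak row decrease plus column-distinctness, and then contradict the column inversion (triple) condition using the $i+1$ at $(r,c)$ sitting above it. Your version is slightly more explicit about why that cell exists and why its entry avoids both $i$ and $i+1$, but the route is the same.
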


\begin{proof}
  If this were the case, then there must be an entry, say $k$, in column $c$ immediately left of the $i$ in column $c+1$. 
  By the weakly decreasing rows condition, $k \geqslant i$, and so by the distinct column entries condition, and $k > i+1$.
  However, since there is an $i+1$ above $k$, the entry immediately right of $k$, which is an $i$, is not larger than $i+1$, 
  a contradiction to the key tableaux column inversion condition.
\end{proof}
  
The \defn{weight} of a semi-standard key tableau $T$, denoted by $\wt(T)$, is the weak composition whose $i$th 
part is the number of occurrences of $i$ in $T$. The following result is proved in \cite{Ass-H} by showing that the semi-standard key tableaux conditions are equivalent to the triple conditions on Mason's semi-skyline augmented fillings \cite{Mason.2009}. This more direct characterization facilitates the constructions to follow.

\begin{theorem}[\cite{Ass-H}]
  The key polynomial $\key_a(x)$ is given by
  \begin{equation}
    \key_{a}(x) = \sum_{T \in \SSKT(a)} x_1^{\wt(T)_1} \cdots x_n^{\wt(T)_n} ,
    \label{e:key}
  \end{equation}
  where $\SSKT(a)$ is the set of semi-standard key tableaux of shape $a$.
  \label{thm:key-SSKT}  
\end{theorem}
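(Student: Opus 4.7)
The plan is to invoke Mason's original result \cite{Mason.2009} expressing $\key_a(x)$ as a generating function over her semi-skyline augmented fillings (SSAFs) of shape $a$, and to reduce Theorem \ref{thm:key-SSKT} to a weight-preserving bijection between $\SSKT(a)$ and the set of SSAFs of shape $a$. Since both families are fillings of the same key diagram by positive integers, the bijection can be taken to be the identity on the underlying array and the weight statistic is literally the same on both sides; all of the content therefore lies in verifying that the two collections of axioms define the same set of fillings.

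First I would match the ambient constraints. An SSAF comes augmented with a virtual basement whose entry in row $i$ is $i$, and Mason requires columns to strictly decrease from bottom to top. Combined with her Type A triple condition, the basement forces every non-basement entry in row $r$ to be at most $r$, which is precisely Definition \ref{def:SSKT}, and it simultaneously imposes the distinct column entries clause from Definition \ref{def:key-tableau}.

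Next I would match the two sets of interior conditions. Mason's Type A triples translate directly into the requirement that rows weakly decrease from left to right, the second clause in Definition \ref{def:key-tableau}. Mason's Type B triples, which compare three cells spread across two adjacent columns of unequal height, translate into the column inversion clause of Definition \ref{def:key-tableau}: whenever $i$ sits above $k$ in a column with $i<k$, the cell immediately right of $k$ contains some entry $j$ with $i<j$. The main obstacle is this last translation: one must carefully handle the boundary case in which the shorter of the two adjacent rows forces one of the three cells of the hypothetical triple to be a basement entry, and verify that Mason's inequality continues to capture the key-tableau axiom there (this is where her left-justified row shape and the $i$ in the basement on the shorter side enter). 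Once these equivalences are established, Lemma \ref{lem:cross} is an immediate consequence of the distinct-column and column-inversion conditions, and the theorem follows by summing $x_1^{\wt(T)_1}\cdots x_n^{\wt(T)_n}$ across the bijection and quoting Mason's formula.
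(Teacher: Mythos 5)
Your proposal follows essentially the same route the paper takes: it does not prove Theorem~\ref{thm:key-SSKT} from scratch but attributes it to \cite{Ass-H}, where it is established precisely by showing that the semi-standard key tableaux conditions are equivalent to the triple conditions on Mason's semi-skyline augmented fillings \cite{Mason.2009} and then quoting Mason's generating-function formula. Your axiom-matching outline (identity on the underlying filling, basement forcing the row-index bound, triples matching the row and column-inversion clauses) is exactly the content of that equivalence, so the approaches coincide.
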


The map from standard key tableaux of shape $a$ to standard Young tableaux of shape $\lambda$, 
where $\lambda$ is the unique partition rearrangement of $a$, from \cite{Ass-W} relates the tableaux models 
for key polynomials and Schur polynomials. We extend this map to the semi-standard case as follows.

\begin{definition}
  Given a weak composition $a$ of length $n$, define the \defn{column sorting map} on $\SSKT(a)$ by letting 
  cells fall vertically until there are no gaps between rows, sorting the columns to decrease bottom to top, and then 
  replacing all entries by $i \mapsto n-i+1$.
  \label{def:K2Y}
\end{definition}

For example, the semi-standard key tableaux in Figure~\ref{fig:SSKT} map to the semi-standard Young tableaux 
in the first two rows of Figure~\ref{fig:SSYT}, respectively. The four semi-standard Young tableaux in the bottom 
row of Figure~\ref{fig:SSYT} are not in the image of the column sorting map.

\begin{proposition}
  The column sorting map is a well-defined, injective map $\phi \colon \SSKT(a) \rightarrow \SSYT(\lambda)$, 
  where $\lambda$ is the partition rearrangement of $a$.
  \label{prop:K2Y-well}
\end{proposition}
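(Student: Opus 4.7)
The plan is to verify that $\phi$ maps into $\SSYT(\lambda)$ and is injective. Well-definedness requires checking the shape, column strictness, and row weak increase of the output. Injectivity follows once I observe that $\phi(T)$ depends on $T$ only through the multiset of values in each column, and then show that these multisets determine $T$ uniquely within $\SSKT(a)$.

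For the shape, column $c$ of the key diagram of $a$ contains $|\{i : a_i \geq c\}| = \lambda'_c$ cells (since $\lambda$ is a rearrangement of $a$), so the vertical fall produces the Young diagram of $\lambda$; the column sort and involution preserve shape. Column strictness is immediate: the SSKT has distinct column entries by Definition~\ref{def:key-tableau}, the sort makes columns strictly decrease from bottom to top, and the involution $i \mapsto n-i+1$ flips this to strictly increasing, as required by the French SSYT convention. For row weak increase, it suffices, after stripping the involution, to show that the $r$-th largest value in column $c$ of $T$ is at least the $r$-th largest in column $c+1$. Otherwise, column $c+1$ contains $r$ entries at least some threshold $v^*$ while column $c$ contains at most $r - 1$ such entries. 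But every row with a cell in column $c+1$ also has a cell in column $c$, since $a_i \geq c+1 \Rightarrow a_i \geq c$, and the weakly decreasing rows condition forces the column-$c$ entry to be at least as large. Hence column $c$ has at least $r$ entries $\geq v^*$, a contradiction.

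For injectivity, suppose $T_1 \neq T_2$ in $\SSKT(a)$ share all column multisets, and let $c$ be the rightmost column where they disagree. Let $r$ be the smallest row index at which $T_1(r,c) \neq T_2(r,c)$. Since rows of index less than $r$ agree in column $c$ and the column-$c$ multisets of $T_1$ and $T_2$ are equal, we may assume $u := T_1(r,c) < T_2(r,c) =: v$; the equality of multisets then forces $T_2$ to place $u$ at some row $r'' > r$ in column $c$. In French notation $r'' > r$ means $u$ sits physically above $v$ in $T_2$'s column $c$, and with $u < v$ this triggers the triple clause of Definition~\ref{def:key-tableau} in $T_2$, producing $T_2(r, c+1) = j > u$. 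Since $T_1$ and $T_2$ agree on all columns strictly to the right of $c$, we get $T_1(r, c+1) = j > u = T_1(r, c)$, contradicting the weakly decreasing rows condition in $T_1$. The main obstacle is this injectivity step: choosing the rightmost disagreement column ensures $T_1$ and $T_2$ agree in column $c+1$, and choosing the smallest disagreement row produces the smaller-above-larger configuration in $T_2$ that activates the triple condition and yields a row-decrease violation in $T_1$.
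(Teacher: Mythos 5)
Your proof is correct, but it proceeds quite differently from the paper's. For well-definedness, the paper analyzes how the column sort rearranges whole rows (longest rows to the bottom, equal-length rows reversed) and invokes the column inversion condition to see that row entries only change monotonically; your argument instead proves the needed inequality directly --- that the $r$-th largest entry of column $c$ dominates the $r$-th largest entry of column $c+1$ --- by a counting argument that uses only left-justification and weakly decreasing rows, so you never need the triple condition for this half. For injectivity, the paper constructs an explicit inverse: apply the involution, then greedily refill the shape of $a$ column by column from right to left, bottom to top, choosing the smallest entry consistent with weakly decreasing rows; this has the side benefit of characterizing the image (the tableaux not in the image are exactly those where the greedy filling would place an entry above its row index). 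You instead observe that $\phi$ factors through column multisets and prove uniqueness by an extremal argument: at the rightmost disagreeing column and lowest disagreeing row, the displaced smaller entry in $T_2$ sits above the larger one, the triple condition forces a large entry in column $c+1$, and agreement to the right of $c$ transports that entry into $T_1$ where it breaks row-decrease. Your route is arguably tighter where the paper is sketchiest (the paper's ``it is easy to see that the column inversion condition is maintained''), at the cost of not producing the inverse map or the description of the complement of the image that the paper alludes to afterward.
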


\begin{proof}
  The column strict condition on semi-standard key tableaux ensures that columns have distinct values. Therefore 
  by construction, a column sorted tableau will have strictly increasing columns. By the column 
  inversion condition for key tableaux, if row $j$ sits above row $i$ and is weakly longer, then column by column 
  the entries in row $j$ must be greater than those in row $i$. Consider applying the column sorting map by first 
  rearranging rows of longest size at the bottom and reversing the relative order of rows of equal length. Since 
  entries within rows are maintained, the weakly decreasing row condition on semi-standard key tableaux is 
  obviously maintained by this process. The column sorting necessarily brings entries from a strictly 
  shorter row down into a longer row. That is, row values can be increased only when the first $k$ values all 
  increase for some $k$, and entries decrease only when the entire row is changed, maintaining the 
  weakly decreasing row condition. Hence the image of the map is indeed a semi-standard Young tableau 
  of shape $\lambda$.

  To see that the map is injective, we can define an inverse map by first applying $i \mapsto n-i+1$ to all letters
  in a semi-standard Young tableau. Then fill the shape of $a$ column by column 
  from right to left, and within a column from bottom to top, according to the columns of the given tableau
  by selecting at each step the smallest available entry that maintains the weakly decreasing 
  row condition. To see that the column inversion condition for key tableaux still holds, suppose $j$ is the smallest 
  label available that can be placed in cell $C$ in order to satisfy weakly decreasing rows. It is easy to see that 
  the column inversion condition for key tableaux is maintained, but it could happen that an entry is placed in a 
  row with smaller index. The tableaux for which this occurs are precisely the ones not in the image of the column 
  sorting map.  
\end{proof}

%
\subsection{Crystal operators on semi-standard key tableaux}
\label{sec:SSKT}

We generalize the crystal structure on semi-standard Young tableaux to a Demazure crystal structure on 
semi-standard key tableaux as follows.

For a word $w$ of length $k$ with letters in the alphabet $A=\{1,2,\ldots,n\}$, an integer $1 \leqslant r \leqslant  k$, 
and an integer $1\leqslant i<n$, define
\begin{equation}
  m_i(w,r) = \wt(w_{r} w_{r+1} \cdots w_{k})_{i+1} - \wt(w_{r} w_{r+1} \cdots w_{k})_{i}.
\end{equation}
Set $m_i(w) = \max_r\{m_i(w,r)\}$. Observe that if $m_i(w) > 0$ and $q$ is the rightmost occurrence of this maximum, 
then $w_q = i+1$, and if $p$ is the leftmost occurrence of this maximum, then either $p=1$ or $w_{p-1} = i$.

For $T$ a key tableau, the \defn{column reading word of $T$}, denoted by $w(T)$, is the word obtained by reading 
the entries of $T$ down columns from right to left. Note that columns for key tableaux are read in the reverse order 
as columns for Young tableaux. For example, the column reading word of the leftmost key tableau in the top row of
 Figure~\ref{fig:SSKT} is $42432$.

\begin{definition}
  Given an integer $1\leqslant i<n$, define the \defn{raising operators} $e_i$ on semi-standard key tableaux 
  of shape $a$ of length $n$ as follows: if $m_i(w(T)) \leqslant 0$, then $e_i(T)=0$; otherwise, let $q$ be the largest 
  index such that $m_i(w(T),q) = m_i(w(T))$, and $e_i(T)$ changes all entries $i+1$ weakly right of the entry in $T$ 
  corresponding to $w(T)_q$ to $i$ and change all $i$'s in the same columns as these entries to $i+1$'s.
  \label{def:key-raise}
\end{definition}

For an example of the raising operator $e_1$, see Figure~\ref{fig:key-raise}. For this example, the column reading 
word is given below each key tableau with the largest index that attains $m_1(w(T))>0$ underlined and the 
corresponding entry in the tableau circled.

\begin{figure}[ht]
  \begin{displaymath}
    \begin{array}{c@{\hskip 2\cellsize}c@{\hskip 2\cellsize}c@{\hskip 2\cellsize}c@{\hskip 2\cellsize}c}
      \rnode{A}{\vline\tableau{3 & 1 & 1 \\ 2 & 2 & 2 & 2 & \cir{2} \\ & }} &
      \rnode{B}{\vline\tableau{3 & 1 & 1 \\ 2 & \cir{2} & 2 & 2 & 1 \\ & }} &
      \rnode{C}{\vline\tableau{3 & 2 & 2 \\ \cir{2} & 1 & 1 & 1 & 1 \\ & }} &
      \rnode{D}{\vline\tableau{3 & 2 & 2 \\ 1 & 1 & 1 & 1 & 1       \\ & }} &
      \rnode{E}{\raisebox{-\cellsize}{$0$}} \\
      \underline{2}2121232 &
      12121\underline{2}32 &
      1121213\underline{2} &       
      11212131 &       
    \end{array}
    \psset{nodesep=5pt,linewidth=.1ex}
    \ncline[linecolor=blue]{->} {A}{B} \naput{e_1}
    \ncline[linecolor=blue]{->} {B}{C} \naput{e_1}
    \ncline[linecolor=blue]{->} {C}{D} \naput{e_1}
    \ncline[linecolor=blue]{->} {D}{E} \naput{e_1}
  \end{displaymath}
  \caption{\label{fig:key-raise}An example of the raising operator $e_1$ on key tableaux.}
\end{figure}

\begin{proposition}
  The raising operator $e_i \colon \SSKT(a) \rightarrow \SSKT(a) \cup \{0\}$ is a well-defined map. Moreover, the 
  restriction of $e_i$ to the pre-image $e_i^{-1}(\SSKT(a))$ satisfies $\wt(e_i(T))_i = \wt(T)_i +1$, 
  $\wt(e_i(T))_{i+1} = \wt(T)_{i+1}-1$, and $\wt(e_i(T))_j = \wt(T)_j$ for all $j \neq i,i+1$.
  \label{prop:key-well}
\end{proposition}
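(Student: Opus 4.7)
The plan is to verify each defining property of a semi-standard key tableau for $e_i(T)$ in turn, and then compute the weight change. Write $w_q = T(r^*, c^*) = i+1$, and let $K$ denote the number of entries $i+1$ appearing in row $r^*$ at columns $\geq c^*$; by the row-weakly-decreasing condition these occupy exactly columns $c^*, c^*+1, \ldots, c^*+K-1$, which I will call the \emph{modified columns}. The operator changes the $i+1$ at row $r^*$ of each modified column to $i$, and promotes each $i$ appearing elsewhere in these columns to $i+1$.

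To show that $e_i(T)$ lies in $\SSKT(a) \cup \{0\}$, I verify the defining conditions one at a time. Shape preservation and column-strictness are immediate, since the swap within each modified column neither duplicates nor removes entries. The entry bound on newly created $i$'s is automatic (the row is $r^* \geq i+1 > i$), while the entry bound on newly created $i+1$'s and the row-weakly-decreasing and column-inversion conditions are handled by localized case analysis: using the maximality of $q$ and the definition of $K$, the entries at $(r^*, c^* - 1)$ and $(r^*, c^* + K)$ bracket the modified row-run on either side in a compatible way, and Lemma~\ref{lem:cross} forbids exactly the configuration --- two adjacent modified columns with $i$ above row $r^*$ on the left and $i$ below row $r^*$ on the right --- that could otherwise create a violation.

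For the weight change, let $\alpha$ denote the number of modified columns containing no $i$. The swap rule contributes a net change of $(+\alpha, -\alpha)$ to the counts of $(i, i+1)$ and leaves other counts unchanged, so the task reduces to proving $\alpha = 1$. For the upper bound $\alpha \leq 1$, I track the walk $r \mapsto m_i(w(T), r)$ in reverse-scan order column by column: the walk attains its maximum value $M$ at $w_q$, and because $q$ is the \emph{rightmost} maximum, a second modified column with no $i$ (each such column contributes a net $+1$ to the walk) would force the walk strictly above $M$, a contradiction.

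The lower bound $\alpha \geq 1$ is where Lemma~\ref{lem:cross} becomes essential. Suppose for contradiction that $\alpha = 0$, so every modified column contains both $i$ and $i+1$. The maximality of $q$ forces column $c^*$ into the ``$i$ above $i+1$'' configuration: if instead the $i$ in column $c^*$ were below row $r^*$, the walk value just past column $c^*$ (in word order) would already equal $M$ at a position strictly right of $q$, contradicting that $q$ is the rightmost maximum. Dually, the column-inversion condition applied at $(r^*, c^* + K - 1)$ forces column $c^* + K - 1$ into the ``$i$ below $i+1$'' configuration, since the entry at $(r^*, c^* + K)$ is at most $i$ by the definition of $K$. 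Applying Lemma~\ref{lem:cross} successively from column $c^* + K - 1$ leftward then forces every modified column --- including $c^*$ --- into the ``$i$ below $i+1$'' configuration, contradicting the ``$i$ above $i+1$'' configuration established for column $c^*$. Hence $\alpha = 1$, and this completes the verification.
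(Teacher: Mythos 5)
Your weight computation is sound and in places cleaner than the paper's: splitting into $\alpha\leqslant 1$ (via the walk and the rightmost-maximum property) and $\alpha\geqslant 1$ (by propagating the contrapositive of Lemma~\ref{lem:cross} leftward from column $c^*+K-1$, whose configuration is pinned down by the column-inversion condition at $(r^*,c^*+K)$) is a correct reorganization of the paper's argument, which instead pins down column $c^*$ first and propagates rightward. However, the well-definedness half of the proposition is where most of the work lies, and your proposal does not do it. The step you defer to ``localized case analysis'' hides the one structural fact on which everything else hinges and which you never establish: \emph{no modified column contains an $i$ strictly below row $r^*$}. Without it, the entry bound on promoted letters genuinely fails --- a modified column could a priori contain an $i$ in row $i$ below $r^*$, and promoting it to $i+1$ would violate the semi-standardness condition --- and the column-inversion check (``if a column has both letters, the $i+1$ ends up above the $i$'') collapses as well. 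Your own framework treats the ``$i$ below $i+1$'' configuration as a live possibility (it is excluded only under the $\alpha=0$ hypothesis inside your contradiction argument), so this is not a routine omitted detail. The same missing fact also undercuts your $\alpha\leqslant 1$ step: the walk increment across the pair of columns $(j,j+1)$ is $1-A_j-B_{j+1}$, where $A_j$ (resp.\ $B_{j+1}$) records an $i$ above row $r^*$ in column $j$ (resp.\ below row $r^*$ in column $j+1$), so a no-$i$ column contributes $+1$ only once you know $B_{j+1}=0$, and a column with an $i$ contributes a nonnegative amount only via Lemma~\ref{lem:cross}.

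The paper supplies exactly the missing fact: if a modified column $j+1$ has an $i$ in some row $r''<r^*$, then the entry $k$ at $(r'',j)$ satisfies $k\geqslant i$ by weak decrease along rows, $k\leqslant i+1$ by the column-inversion condition applied to the $i+1$ at $(r^*,j)$ sitting above it, and $k\neq i+1$ by column-distinctness; hence $k=i$ and the below-row-$r^*$ occurrence of $i$ propagates one column to the left. Since the leftmost modified column $c^*$ has no $i$ below row $r^*$ --- this is your own rightmost-maximum argument --- induction eliminates all of them. You should add this as a preliminary claim and then actually carry out the verification of the row, column, and inversion conditions for $e_i(T)$ rather than asserting them.
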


\begin{proof}
  Let $T \in \SSKT(a)$, set $m = m_i(w(T))$, and suppose $m>0$. Let $x$, say in row $r$ and column $c$, 
  be the cell in $T$ that attains $m$ at the rightmost position in column reading order. We claim that every cell weakly 
  right of $x$ in row $r$ with entry $i+1$ except for one has an $i$ above it. If the entry immediately right of $x$ is $h$ 
  for some $h < i+1$, then the key tableaux conditions ensure that there cannot be an $i$ above $x$ since $h \leqslant i$. 
  Suppose, then, that there is an $i+1$ immediately right of $x$. Since $x$ attains the maximum $m$ and there is 
  an $i+1$ to its left, we must have an $i$ between them in column reading order. Thus there must be an 
  $i$ either below row $r$ in column $c+1$ or above row $r$ in column $c$. If there is an $i$ in row $r^{\prime}<r$ 
  in column $c+1$, then there must be an entry, say $k$, in row $r^{\prime}$ in column $c$ satisfying $k \geqslant i$. 
  Moreover, by the key tableau column inversion condition, we cannot have $k>i+1$ since $i+1>i$. Therefore $k=i$, 
  in which case $x$ cannot be the rightmost position to attain $m$, a contradiction. Moreover, it now follows by 
  induction from Lemma~\ref{lem:cross} that every $i+1$ right of $x$ in row $r$ either has an $i$ above it or no $i$ 
  in its column, and the latter cannot be the case more than once else the rightmost $i+1$ would have $i$-index 
  greater than $m$. This proves the claim, from which it follows that one more cell changes entry from $i+1$ to $i$ 
  than the reverse, thus proving $\wt(e_i(T))_i = \wt(T)_i +1$, $\wt(e_i(T))_{i+1} = \wt(T)_{i+1}-1$, and 
  $\wt(e_i(T))_j = \wt(T)_j$ for all $j \neq i,i+1$. 

  Next we show that rows of $e_i(T)$ are weakly decreasing. This is clear for row $r$ since all $i+1$ weakly 
  right of $x$ change to $i$. If $i$ changes to $i+1$ in cell $y$ and the cell immediately left of $y$ also contains an $i$, 
  then this $i$ also is changed to an $i+1$. This is clear from the previous analysis provided $y$
  is not in the column of $x$; if $y$ is in the column of $x$ and has an $i$ immediately to its left, then $x$ cannot 
  be the rightmost cell in column reading order to attain $m$. 

  Next we show that columns of $e_i(T)$ have distinct entries. Since $x$ cannot have an $i$ below it and be the 
  leftmost cell in column reading order to attain $m$, any $i+1$ that changes to an $i$ either has no $i$ in the column 
  or an $i$ above it. In the latter case, this $i$ will become an $i+1$. 

  Next we show that $e_i(T)$ if $a<c$ with $a$ above $c$, then there is an entry $b$ immediately right of $c$ with $a<b$. If a column contains $i$ and not $i+1$, then nothing is changed, and if it has both, then the $i+1$ appears above $i$ in $e_i(T)$. Therefore the only potential problem occurs when $b=i+1$ in $T$ is changed to $i$ in $e_i(T)$ and $a=i$. In this case, if the column of $a$ has no $i+1$, then $b$ does not attain $m$ and is not changed to $i$, and otherwise both $a$ and $c$ change removing the inversion triple from consideration. 

  Finally, decrementing values maintains the property that entries do not exceed their row index, and $i$ changes to 
  $i+1$ only when it sits above an $i+1$, so these entries lie strictly above row $i+1$. Therefore $e_i(T)$ is a 
  semi-standard key tableau.
\end{proof}

\begin{figure}[ht]
  \begin{displaymath}
    \begin{array}{cccccc}
      & & \rnode{b1}{\vline\tableau{3 & 2 \\ 2 \\ 1 & 1 \\ & }} & & & \\[4\cellsize]
      & & \rnode{b2}{\vline\tableau{3 & 3 \\ 2 \\ 1 & 1 \\ & }} & & \rnode{c2}{\vline\tableau{4 & 2 \\ 2 \\ 1 & 1 \\ & }} & \\[4\cellsize]
      \rnode{a3}{\vline\tableau{3 & 3 \\ 1 \\ 2 & 2 \\ & }} & & & \rnode{c3}{\vline\tableau{4 & 3 \\ 2 \\ 1 & 1 \\ & }} & \rnode{C3}{\vline\tableau{4 & 2 \\ 3 \\ 1 & 1 \\ & }} & \\[4\cellsize]
      & \rnode{b4}{\vline\tableau{4 & 3 \\ 1 \\ 2 & 2 \\ & }} & \rnode{B4}{\vline\tableau{4 & 2 \\ 3 \\ 2 & 1 \\ & }} & & \rnode{c4}{\vline\tableau{4 & 3 \\ 3 \\ 1 & 1 \\ & }} & \rnode{d4}{\vline\tableau{4 & 4 \\ 2 \\ 1 & 1 \\ & }} \\[4\cellsize]
      & & \rnode{b5}{\vline\tableau{4 & 3 \\ 3 \\ 2 & 1 \\ & }} & \rnode{c5}{\vline\tableau{4 & 4 \\ 1 \\ 2 & 2 \\ & }} & & \rnode{d5}{\vline\tableau{4 & 4 \\ 3 \\ 1 & 1 \\ & }} \\[4\cellsize]
      \rnode{a6}{\vline\tableau{4 & 3 \\ 3 \\ 2 & 2 \\ & }} & & & & \rnode{c6}{\vline\tableau{4 & 4 \\ 3 \\ 2 & 1 \\ & }} & \\[3\cellsize]
      & & \rnode{b7}{\vline\tableau{4 & 4 \\ 3 \\ 2 & 2 \\ & }} & & & 
    \end{array}
    \psset{nodesep=2pt,linewidth=.1ex}
    \ncline[linecolor=red]{<-}  {b1}{b2} 
    \ncline[linecolor=green]{<-}{b1}{c2} 
    \ncline[linecolor=blue]{<-} {b2}{a3} 
    \ncline[linecolor=red]{<-}  {c2}{C3} 
    \ncline[linecolor=green]{<-}{b2}{c3} 
    \ncline[linecolor=blue]{<-} {c3}{b4} 
    \ncline[linecolor=blue]{<-} {C3}{B4} 
    \ncline[linecolor=red]{<-}  {C3}{c4} 
    \ncline[linecolor=green]{<-}{a3}{b4} 
    \ncline[linecolor=green]{<-}{c3}{d4} 
    \ncline[linecolor=blue]{<-} {c4}{b5} 
    \ncline[linecolor=blue]{<-} {d4}{c5} 
    \ncline[linecolor=red]{<-}  {B4}{b5} 
    \ncline[linecolor=red]{<-}  {d4}{d5} 
    \ncline[linecolor=green]{<-}{b4}{c5} 
    \ncline[linecolor=green]{<-}{c4}{d5} 
    \ncline[linecolor=blue]{<-} {b5}{a6} 
    \ncline[linecolor=blue]{<-} {d5}{c6} 
    \ncline[linecolor=green]{<-}{b5}{c6} 
    \ncline[linecolor=blue]{<-} {c6}{b7} 
    \ncline[linecolor=green]{<-}{a6}{b7} 
  \end{displaymath}
  \caption{\label{fig:Key-0212}The crystal structure on $\SSKT(0,2,1,2)$, with edges $e_1 \color{blue}\nearrow$, $e_2 \color{red}\uparrow$, $e_3 \color{green}\nwarrow$.}
\end{figure}

\begin{lemma}
  For $T \in \SSKT(a)$ and for any $1 \leqslant i <n$, $e_i(T) \neq 0$ if and only if $f_{n-i}(\phi(T))\neq 0$. 
  In this case, we have $\phi(e_i(T)) = f_{n-i}(\phi(T))$, where $\phi$ denotes the column sorting map. 
  \label{lem:commute}
\end{lemma}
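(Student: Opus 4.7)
The plan is to verify $\phi(e_iT) = f_{n-i}\phi(T)$ (with both sides vanishing simultaneously otherwise) by coupling a column-by-column analysis of $e_i$ on $T$ with a comparison of the SSKT and SSYT reading-word conventions. Set $j = n-i$; under the complementation $i \mapsto n-i+1$ performed by $\phi$, entries $i+1$ and $i$ in $T$ correspond to entries $j$ and $j+1$ in $\phi(T)$.

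First I would establish that $\phi(e_iT)$ differs from $\phi(T)$ in exactly one cell, whose entry changes from $j$ to $j+1$. By the proof of Proposition~\ref{prop:key-well}, $e_i$ modifies only columns containing $i+1$ in row $r$ weakly right of the distinguished cell $x$; each such column either (a) contains both $i$ and $i+1$, which $e_i$ swaps (preserving the column's multiset), or (b) contains $i+1$ but no $i$, in which case the $i+1$ becomes $i$. Exactly one ``exceptional'' column $c^\ast$ is of type (b). Since $\phi$ depends only on each column's multiset, type (a) columns contribute no difference under $\phi$, while $c^\ast$ produces the stated single-cell change.

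Second, I would identify this cell with the one $f_j$ acts on in $\phi(T)$. Each column of $\phi(T)$ contributes $+1$, $-1$, or $0$ to the end-of-column cumulative of $M_j$ according as its SSKT multiset contains only $i+1$, only $i$, or both/neither. Since an SSYT column places $j+1$ above $j$, the intra-column contribution to $M_j$ dips and returns, so the global maximum of $M_j$ along $w(\phi(T))$ is attained at an end-of-column position, and the leftmost position achieving it is the $j$-entry in the leftmost column $c^{\ast\ast}$ whose end-of-column cumulative reaches this maximum. The critical claim is that $c^{\ast\ast} = c^\ast$, which identifies the $f_j$-cell with the cell changed in $\phi(e_iT)$; the biconditional then follows by matching the maxima of $m_i$ and $M_j$.

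The main obstacle is precisely this column identification, since the SSYT side uses a global prefix-count argument while $c^\ast$ is defined through the local row-$r$ analysis pegged to $x$. To reconcile these, I would show that columns of $T$ with $i$ above $i+1$—which produce upward intra-column bumps in the suffix-count walk for $m_i$ on $w(T)$—cannot cause the running SSYT cumulative $M_j$ to reach its maximum at a column earlier than $c^\ast$. This leans on the key tableau column inversion condition (Definition~\ref{def:key-tableau}) and Lemma~\ref{lem:cross}, which constrain how $i+1$'s propagate along row $r$ and where $i$'s can sit above them in adjacent columns.
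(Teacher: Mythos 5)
Your overall strategy is the paper's: both reduce to the subword on $\{i,i+1\}$, exploit the reverse--complement relationship between the statistics $m_i$ on $w(T)$ and $M_{n-i}$ on $w(\phi(T))$, and match the two operators column by column. Your observation that $\phi$ depends only on column multisets --- so the compensated type (a) columns are invisible to $\phi$ and $\phi(e_i T)$ differs from $\phi(T)$ in the single cell coming from $c^\ast$ --- is a clean packaging of what the paper leaves implicit. The gap is in the step you yourself flag as the main obstacle: as stated, your reconciliation is aimed at the wrong walk. A column of $T$ with $i$ above $i+1$ has the same multiset as one with $i+1$ above $i$, so inverted columns are literally invisible to the prefix walk computing $M_{n-i}$ on $w(\phi(T))$; they cannot ``cause'' that cumulative to peak anywhere. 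The danger they create lives entirely on the $m_i$ side: in the suffix walk on $w(T)$ an inverted column is encountered as $i+1$ then $i$, producing a bump whose top strictly exceeds the end-of-column cumulative. A priori this could (a) make $m_i(w(T))$ strictly larger than $M_{n-i}(w(\phi(T)))$, which would break the biconditional you need, and (b) place the anchor $q$ of $e_i$ in a column strictly to the left of $c^\ast$, so that you must separately argue that the rightward, self-compensating propagation of $e_i$ from $q$ deposits its one net change exactly in the column $c^{\ast\ast}$ selected by $f_{n-i}$.

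Both issues are resolved by the structural fact the paper extracts from Lemma~\ref{lem:cross} and the column-inversion condition, and which your plan does not state: a maximal consecutive run of inverted columns is immediately followed on its right by a column containing $i+1$ and no $i$. Consequently the bump tops over such a run all equal the end-of-column cumulative of that terminating column, so they never strictly exceed any end-of-column value; this yields $m_i(w(T)) = M_{n-i}(w(\phi(T)))$ and hence the biconditional. Moreover the first attainment of the maximum in the suffix walk, namely $q$, sits at the left end of such a run whose terminating only-$i{+}1$ column is precisely the unique uncompensated column, giving $c^\ast = c^{\ast\ast}$. If you replace your reconciliation step by this statement about runs --- most of which is already established inside the proof of Proposition~\ref{prop:key-well} --- your argument closes and coincides with the paper's.
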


\begin{proof}
  Given a word $w = w_1 w_2 \cdots w_k$ with $1\leqslant w_j \leqslant n$ for all $j$, let 
  $u = (n-w_k+1) (n-w_{k-1}+1) \cdots (n-w_1+1)$. 
  Then $m_i(w,r) = M_{n-i}(u,k-r+1)$, and $q$ is the index of the rightmost occurrence of $m_i(w)$ in $w$ if and only if 
  $k-q+1$ is the index of the leftmost occurrence of $M_{n-i}(u)$ in $u$. If $T \in \SSKT(a)$ has no column inversions, 
  then since the column reading word of a semi-standard key tableau is right to left and the column reading word 
  of a semi-standard Young tableau is left to right, $w(T)$ and $w(\phi(T))$ precisely have the relationship of 
  $w$ and $u$, and the result follows.

  In the general case, since $e_i$ and $f_i$ depend only on the letters $i,i+1$, we may restrict our attention to the 
  subword on those letters. In doing so, notice that columns with $i$ above $i+1$ appear in consecutive runs separated 
  at least by a column immediately right of the run with an $i+1$ and no $i$. In the column reading word, this manifests itself
  as a string of alternating $i$'s and $i+1$'s that begins and ends with an $i+1$. If we let $q^{\prime}$ denote the leftmost 
  $i+1$ in the alternating string that attains $m_i(w(T))$, then $k-q^{\prime}+1$ is the smallest index that attains 
  $M_{n-i}(w(\phi(T)))$. That is, the rightmost column of $T$ in which an $i+1$ changes to an $i$ \emph{without} an 
  $i$ also changing to an $i+1$ in passing to $e_i(T)$ is precisely the column of $\phi(T)$ in which an $n-i$ changes 
  to an $n-i+1$ in passing to $f_{n-i}(\phi(T))$.
\end{proof}

For example, the semi-standard key tableaux of shape $(0,5,3)$ in Figure~\ref{fig:key-raise} map by the column sorting 
map to the semi-standard Young tableaux of shape $(5,3)$ in Figure~\ref{fig:young-lower}, and the raising operator 
$e_1$ on the former becomes the lowering operator $f_2$ on the latter.

\begin{definition}
  Given an integer $1\leqslant i<n$, define the \defn{lowering operator} $f_i$ on semi-standard key tableaux 
  of shape $a$ as follows: 
  let $p$ be the smallest index such that $m_i(w(T),p) = m_i(w(T))$. If $p=1$ or if the entry in $T$ corresponding to 
  $w_p$ lies in row $i$, then $f_i(T) = 0$; otherwise $f_i(T)$ changes all entries $i$ weakly right of the entry 
  in $T$ corresponding to $w_{p-1}$ to $i+1$ and change all $i$'s in the same columns as these entries to $i$'s.
  \label{def:key-lower}
\end{definition}

For examples of lowering operators on semi-standard key tableaux, see Figure~\ref{fig:Key-0212}
($f_i$ are inverses of $e_i$ when they are defined on an element).

\begin{proposition}
\label{proposition.f intertwine key}
  For $T\in\SSKT(a)$ and for any $1 \leqslant i <n$, if there exists $S\in\SSKT(a)$ such that $e_i(S)=T$, then 
  $f_i(T)=S$, and otherwise $f_i(T)=0$. In particular, the lowering operator $f_i$ is well-defined and if $f_i(T)\neq 0$, 
  then it satisfies $\wt(f_i(T))_i = \wt(T)_i +1$, $\wt(f_i(T))_{i+1} = \wt(T)_{i+1}-1$, and $\wt(f_i(T))_j = \wt(T)_j$ for 
  all $j \neq i,i+1$. Moreover, letting $\phi$ denote the column sorting map, if $f_i(T)\neq 0$, then we have 
  $\phi(f_i(T)) = e_{n-i}(\phi(T))$.
\end{proposition}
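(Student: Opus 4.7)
My plan is to reduce this proposition to the already-known inverse relationship between $e_{n-i}$ and $f_{n-i}$ on semi-standard Young tableaux, transferred through the column sorting map $\phi$. The three key ingredients are Lemma \ref{lem:commute} (the intertwining of $e_i$ with $f_{n-i}$), Proposition \ref{prop:K2Y-well} (injectivity of $\phi$), and Proposition \ref{prop:key-well} (the weight identity for $e_i$).

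The first step is to prove the $f$-version of the intertwining, namely that whenever $f_i(T) \neq 0$ we have $\phi(f_i(T)) = e_{n-i}(\phi(T))$. The argument parallels the proof of Lemma \ref{lem:commute} exactly. The identity $m_i(w(T), r) = M_{n-i}(w(\phi(T)), k-r+1)$ forces the \emph{leftmost} occurrence of the maximum of $m_i$ in $w(T)$ to correspond to the \emph{rightmost} occurrence of the maximum of $M_{n-i}$ in $w(\phi(T))$. The two side-conditions in Definition \ref{def:key-lower} ($p>1$ and the cell of $w(T)_p$ not in row $i$) translate respectively to the non-vanishing condition in Definition \ref{def:young-raise} ($q$ not equal to the length of the word) and to the fact that the entry change in $\phi(T)$ stays within the alphabet $\{1,\dots,n\}$. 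The $(p-1)$-shift in the definition of $f_i$ matches the $(q+1)$-shift in the definition of $e_{n-i}$ because the column reading words are read in opposite directions.

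From this intertwining, the biconditional follows by chasing identities in $\SSYT(\lambda)$. If $S \in \SSKT(a)$ satisfies $e_i(S) = T$, then Lemma \ref{lem:commute} gives $\phi(T) = f_{n-i}(\phi(S))$; since $e_{n-i}$ and $f_{n-i}$ are mutual inverses on $\SSYT(\lambda)$, we obtain $\phi(S) = e_{n-i}(\phi(T)) \neq 0$, and the intertwining yields $\phi(f_i(T)) = \phi(S)$, so by injectivity of $\phi$ we conclude $f_i(T) = S$. Conversely, if $f_i(T) \neq 0$, set $S = f_i(T)$; the intertwining and Lemma \ref{lem:commute} run backwards give $\phi(T) = \phi(e_i(S))$, whence $T = e_i(S)$. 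This simultaneously establishes that $f_i$ is a well-defined map $\SSKT(a) \to \SSKT(a) \cup \{0\}$ and that $f_i$ is the partial inverse of $e_i$. The weight statement then follows immediately from Proposition \ref{prop:key-well} applied to $S = f_i(T)$.

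The most delicate step is the intertwining. In the absence of columns containing $i$ above $i+1$, the correspondence between the leftmost maximum of $m_i$ and the rightmost maximum of $M_{n-i}$ follows directly from the word reversal $u_j = n - w_{k-j+1} + 1$. When such column inversions are present, however, the rigid reversal breaks down and the refined alternating-substring analysis from the proof of Lemma \ref{lem:commute} must be adapted; verifying that the asymmetric shifts $p-1$ and $q+1$ still align correctly in that setting — and that the multi-cell action of $f_i$ on a run of inverted columns matches the single-cell action of $e_{n-i}$ — is where the bookkeeping is subtlest and will constitute the bulk of the proof.
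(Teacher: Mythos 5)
Your overall strategy---push everything through $\phi$ and quote the inverse relation between $e_{n-i}$ and $f_{n-i}$ on $\SSYT(\lambda)$---has two genuine gaps. First, a circularity: to write $\phi(f_i(T))$ and to invoke the injectivity of $\phi$ (Proposition~\ref{prop:K2Y-well}, which is established only on $\SSKT(a)$), you must already know that $f_i(T)$ is a semi-standard key tableau; but you claim to extract well-definedness as a corollary of exactly those steps. The paper argues in the opposite order: it recalls from the proof of Proposition~\ref{prop:key-well} the precise effect of $e_i$ on the column reading word (an alternating string $(i+1)\,i\,(i+1)\cdots(i+1)$ ending at the rightmost position attaining $m_i(w(S))$ becomes an alternating string $i\,(i+1)\cdots i$ beginning immediately left of the leftmost position attaining the new maximum), observes that Definition~\ref{def:key-lower} is built to undo exactly this change, and hence exhibits $f_i(T)$ as equal to the preimage $S$, which is already known to lie in $\SSKT(a)$. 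That word-level analysis is precisely the step you defer to ``the bulk of the proof,'' so the essential content is missing from your sketch.

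Second, your dictionary for the vanishing conditions is wrong, and this is not a minor bookkeeping issue. You claim the condition ``the entry corresponding to $w_p$ lies in row $i$'' translates to ``the entry change in $\phi(T)$ stays within the alphabet.'' But $e_{n-i}$ on $\SSYT(\lambda)$ lowers an entry from $n-i+1$ to $n-i$ and can never leave the alphabet; its only vanishing condition is that $q$ equals the length of the word. Under your translation the row-index condition would therefore never cause $f_i$ to vanish, which would give $f_i(T)=0$ if and only if $e_{n-i}(\phi(T))=0$. That equivalence is false: the failure of it is exactly the Demazure truncation that makes this proposition interesting. For instance, for $a=(0,2,1,2)$ the tableau $T$ with rows $(2,2),(3),(4,4)$ has $f_2(T)=0$ (it is the bottom vertex of Figure~\ref{fig:Key-0212}), yet $e_2(\phi(T))$ is a nonzero element of $\SSYT_4(2,2,1)$---namely one of the four tableaux outside the image of $\phi$. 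So the characterization of when $f_i(T)=0$ cannot be read off from the full crystal $B(\lambda)$; it must be established on the key-tableau side, as the paper's proof does.
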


\begin{proof}
  Recall from the analysis in the proof of Proposition~\ref{prop:key-well} that when $e_i(S) \neq 0$, $w(S)$ and 
  $w(e_i(S))$ differ on the restriction to letters $i,i+1$ precisely in that an alternating string beginning and ending 
  with $i+1$ for which the last entry is the rightmost to attain $m_i(w(S))$ becomes an alternating string beginning 
  and ending with $i$ for which the first entry is immediately left of the leftmost to attain $m_i(w(e_i(S)))$. Therefore 
  if $e_i(S)=T$, then $f_i(T)=S$. We have $f_i(T)=0$ precisely when there is no place to act 
  (when $p=1$) or when acting would violate the semi-standard key tableaux condition that entries cannot exceed 
  their row index. The remainder of the result follows from Proposition~\ref{prop:key-well} and Lemma~\ref{lem:commute}.
\end{proof}

%
\subsection{Demazure crystal on semi-standard key tableaux}
\label{sec:dem}

To arrive at our main result, that the raising and lowering operators on semi-standard key tableaux give a Demazure crystal, 
we refine the column sorting map to an injective map between semi-standard key tableaux for different weak compositions.

\begin{lemma}
  Given a weak composition $a$ and an index $i$ such that $a_i < a_{i+1}$, for $T \in \SSKT(a)$ such that $e_i(T)=0$, 
  there exists $S \in \SSKT(s_i a)$ such that $\phi(T) = \phi(S)$, where $\phi$ is the column sorting map.
  \label{lem:CSM}
\end{lemma}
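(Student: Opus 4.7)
The plan is to exhibit $S$ directly. Observe that shapes $a$ and $s_i a$ differ only in that the cells in row $i+1$ and columns $a_i+1,\ldots,a_{i+1}$ of shape $a$ correspond to cells in row $i$ and those same columns of $s_i a$, while everything else is unchanged. Since the column sorting map $\phi$ depends only on the column multisets of its input, I will take $S(r,c)=T(r,c)$ for $r\notin\{i,i+1\}$, move $T(i+1,c)$ down to row $i$ for each $a_i<c\le a_{i+1}$, and determine the entries in rows $i,i+1$ in columns $c\le a_i$ by the inverse construction described in the proof of Proposition~\ref{prop:K2Y-well}. The multisets in every column are then preserved, so $\phi(S)=\phi(T)$ is automatic, and the only risk is that a moved entry in row $i$ might violate the row-index bound, i.e., that $T(i+1,c)>i$ for some $a_i<c\le a_{i+1}$. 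By the weakly decreasing condition on row $i+1$, this reduces to showing $T(i+1,a_i+1)\le i$.

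I will prove $T(i+1,a_i+1)\le i$ by contradiction. Assume $T(i+1,a_i+1)=i+1$, so $T(i+1,c)=i+1$ for all $c\le a_i+1$ by row monotonicity, and induct on $c=1,2,\ldots,a_i$ to show $T(i,c)=i$. At step $c$, examine the suffix of $w(T)$ starting at the position of the cell $(i+1,c)$; this suffix consists of that cell, the cells in column $c$ below row $i+1$, and all cells in columns $1,\ldots,c-1$ read top-to-bottom. Column distinctness combined with $T(i+1,c')=i+1$ for $c'\le c$ shows that row $i+1$ is the only source of $(i+1)$'s in this suffix, giving $\#(i+1)=c$. The inductive hypothesis $T(i,c')=i$ for $c'<c$, again via column distinctness, forbids $i$'s from any row $r>i+1$ in columns $c'<c$, so $\#(i)=(c-1)+[T(i,c)=i]$. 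The hypothesis $e_i(T)=0$ requires $\#(i+1)\le\#(i)$ in every suffix, forcing $T(i,c)=i$. Running the same suffix analysis one step further at $c=a_i+1$ produces $\#(i+1)=a_i+1$ but $\#(i)=a_i$ (row $i$ has no cell in column $a_i+1$, and column distinctness against the inductively established $i$'s in row $i$ still kills all outside contributions), yielding the desired contradiction.

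The main obstacle is the inductive suffix calculation, where care is required in determining which cells lie in the suffix beginning at $(i+1,c)$ (given that $w(T)$ is read right-to-left across columns and top-to-bottom within each column) and in invoking column distinctness at precisely the right points to rule out contributions of $i$'s and $(i+1)$'s from rows outside $\{i,i+1\}$. Once the key bound $T(i+1,c)\le i$ for $a_i<c\le a_{i+1}$ is established, the remaining conditions defining $\SSKT(s_i a)$ for the constructed $S$ follow by routine inspection, since column multisets are preserved from $T$ and the inverse column sorting map provides a consistent rule for the placement of the entries in the unchanged columns $c\le a_i$.
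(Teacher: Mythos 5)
Your overall strategy coincides with the paper's: construct $S$ explicitly by dropping the $a_{i+1}-a_i$ boundary cells of row $i+1$ into row $i$, repair columns $c\le a_i$, and use the fact that $\phi$ depends only on column multisets. Your suffix-counting argument that $e_i(T)=0$ forces $T(i+1,a_i+1)\le i$ --- via the induction showing $T(i,c)=i$ in every column $c$ where $T(i+1,c)=i+1$ --- is correct, and it is a genuinely more detailed justification than the paper's one-sentence assertion of the same fact.

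However, there is a gap in the sentence ``the only risk is that a moved entry in row $i$ might violate the row-index bound, i.e., that $T(i+1,c)>i$ for some $a_i<c\le a_{i+1}$.'' The repair of columns $c\le a_i$ can also transfer $T(i+1,c)$ into row $i$: if $T(i,a_i)<T(i+1,a_i+1)$, the intermediate row $i$ fails to be weakly decreasing, and your inverse-construction step (equivalently, the paper's column swaps) is forced to exchange the entries of rows $i$ and $i+1$ in a contiguous run of columns $c\le a_i$. If any such column had $T(i+1,c)=i+1$, the row-index bound would fail there too; ruling this out again requires the hypothesis $e_i(T)=0$, and the paper invokes it a second time precisely for this step (``there cannot be any letters $i+1$ that are moved down at this step either''). ``Routine inspection'' does not cover this, since it is not a consequence of preserved column multisets alone. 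The omission is repairable with what you already proved: your induction gives $T(i,c)=i$ whenever $T(i+1,c)=i+1$ with $c\le a_i$, while an exchange at column $c$ requires $T(i,c)$ to be strictly less than the entry already placed at $(i,c+1)$, which is at most $i$; hence no column containing an $i+1$ in row $i+1$ is ever exchanged. You should add this (or an equivalent) argument, and at least a word on why weak decrease of row $i+1$ and the column inversion condition survive, to make the verification that $S\in\SSKT(s_ia)$ complete.
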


\begin{proof}
  The statement is equivalent to the assertion that there exists $S \in \SSKT(s_i a)$ with the same column sets as $T$. 
  We may describe the map from T to S explicitly as follows. First, move the $a_{i+1}-a_i$ rightmost cells in row $i+1$ 
  down to row $i$. Since $e_i(T)=0$, there cannot be a letter $i+1$ that is moved down since if any of these cells contain
  an $i+1$, there will be a positive index allowing $e_i$ to act non-trivially. If, after this, row $i$ is not weakly decreasing, 
  then swap the entries in rows $i$ and $i+1$ of the offending column. Since $e_i(T)=0$, there cannot be any letters $i+1$
  that are moved down at this step either, so the resulting tableau $S$ has no entry exceeding its row index. Rows clearly
  maintain their weakly decreasing status, and it is easy to see that no violations of the column inversion condition can 
  arise. Therefore $S \in \SSKT(s_i a)$.
\end{proof}

Lemma~\ref{lem:CSM} ensures that the following operators are well-defined on semi-standard key tableaux.

\begin{definition}
  Given a weak composition $a$ and an index $i$ such that $a_i < a_{i+1}$, define an operator $\mathcal{E}_i$ on 
  $\SSKT(a)$ by $\mathcal{E}_i(T) = S$, where $S\in\SSKT(s_i a)$ satisfies $\phi(S) = \phi(e_i^{k-1}(T))$ for $k$ minimal 
  such that $e_i^k(T)=0$.
  \label{def:pi-key}
\end{definition}

For examples of $\mathcal{E}_i$, see Figure~\ref{fig:pi-tab}. Similar to $\pi_w$ and $\partial_w$, we may extend this to 
define $\mathcal{E}_w=\mathcal{E}_{i_1} \cdots \mathcal{E}_{i_k}$, where $s_{i_1} \cdots s_{i_k}$ 
is any reduced expression for $w$. It is easy to see that this is well-defined from the local relations of the type $A$ crystal 
operators on tableaux as characterized by Stembridge \cite{Ste03}. 

\begin{figure}[ht]
  \begin{displaymath}
    \begin{array}{c@{\hskip3\cellsize}c@{\hskip3\cellsize}c@{\hskip3\cellsize}c@{\hskip3\cellsize}c}
      & \rnode{b}{\vline\tableau{1 \\ 3 & 3 \\ 2 & 2 \\ & }} & & & \\ \\
      \rnode{A}{\vline\tableau{4 & 3 \\ 1 \\ 2 & 2 \\ & }} &
      \rnode{B}{\vline\tableau{4 & 3 \\ 2  \\ & \\ 1 & 1}} &
      \rnode{C}{\vline\tableau{2 \\ 3 & 3 \\ & \\ 1 & 1 }} &
      \rnode{D}{\vline\tableau{3 \\ & \\ 2 & 2 \\ 1 & 1 }} &
      \rnode{E}{\vline\tableau{ & \\ 3 \\ 2 & 2 \\ 1 & 1}} \\ \\
      & &
      \rnode{c}{\vline\tableau{4 & 3 \\ & \\ 2 \\ 1 & 1 }} &
      \rnode{d}{\vline\tableau{ & \\ 3 & 3 \\ 2 \\ 1 & 1 }} &
    \end{array}
    \psset{nodesep=2pt,linewidth=.1ex}
    \ncline[linecolor=green]{->} {A}{b} \naput{\mathcal{E}_{3}}    
    \ncline[linecolor=blue ]{->} {b}{C} \naput{\mathcal{E}_{1}}
    \ncline[linecolor=blue ]{->} {A}{B} \naput{\mathcal{E}_{1}}    
    \ncline[linecolor=green]{->} {B}{C} \naput{\mathcal{E}_{3}}
    \ncline[linecolor=red  ]{->} {C}{D} \naput{\mathcal{E}_{2}}
    \ncline[linecolor=green]{->} {D}{E} \naput{\mathcal{E}_{3}}
    \ncline[linecolor=red  ]{->} {B}{c} \nbput{\mathcal{E}_{2}}
    \ncline[linecolor=green]{->} {c}{d} \naput{\mathcal{E}_{3}}
    \ncline[linecolor=red  ]{->} {d}{E} \nbput{\mathcal{E}_{2}}
  \end{displaymath}
  \caption{\label{fig:pi-tab}An example of the $\mathcal{E}_i$ operators on semi-standard key tableaux.}
\end{figure}

Given a weak composition $a$, for $w$ the permutation that sorts $a$ to partition shape $\lambda$, the operator 
$\mathcal{E}_w$ takes $T\in\SSKT(a)$ to the highest weight element of the crystal along edges specified by $w$. 
This is precisely the statement needed to show that the crystal operators defined on semi-standard key tableaux of 
shape $a$ realize the Demazure crystal for $w$.

\begin{theorem}
\label{theorem.key demazure}
  Let $a$ be a weak composition that sorts to the partition $\lambda$. The raising and lowering operators on $\SSKT(a)$ 
  give the Demazure crystal for highest weight $\lambda$ truncating with respect to the minimal length permutation $w$ that 
  sorts $a$ to $\lambda$.
\end{theorem}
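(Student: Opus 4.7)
The plan is to induct on the length $\ell(w)$ of the minimal permutation $w$ sorting $a$ to $\lambda$, using the operator $\mathcal{E}_i$ of Definition~\ref{def:pi-key} as the SSKT analogue of the Demazure operator $\mathfrak{D}_i$, and using the column sorting map $\phi$ together with its intertwining properties (Lemma~\ref{lem:commute}, Proposition~\ref{proposition.f intertwine key}) to transfer the crystal structure to $B(\lambda)$.

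For the base case $\ell(w)=0$, we have $a=\lambda$, and I would first check that $\SSKT(\lambda)$ contains a unique element $u^K_\lambda$ in which row $r$ is filled by $r$'s: the row-index cap forces entries in row $r$ to be at most $r$, and combined with column-distinctness and the weakly-decreasing row condition this determines row $r$ inductively from the bottom up. A direct inspection of its column reading word then shows $e_j(u^K_\lambda)=f_j(u^K_\lambda)=0$ for every $j$, so $\SSKT(\lambda)$ is an isolated singleton matching $B_e(\lambda)=\{u_\lambda\}$ under $\phi$.

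For the inductive step, choose $i$ with $a_i<a_{i+1}$ and set $a'=s_i a$. Then $w':=s_i w$ has length $\ell(w)-1$ and is the minimal length permutation sorting $a'$ to $\lambda$, so by induction $\SSKT(a')$ realizes $B_{w'}(\lambda)$. The main task is to decompose $\SSKT(a)$ as a disjoint union of full $i$-strings, one for each $T'\in\SSKT(a')$, and identify this with $\mathfrak{D}_i B_{w'}(\lambda) = B_w(\lambda)$. I would establish three sub-claims: (i) Lemma~\ref{lem:CSM} supplies, for each $T'\in\SSKT(a')$, a unique $T_0\in\SSKT(a)$ with $e_i(T_0)=0$ and $\phi(T_0)=\phi(T')$, giving a bijection between $\SSKT(a')$ and the $e_i$-highest elements of $\SSKT(a)$; (ii) the $f_i$-chain $T_0, f_i(T_0), f_i^2(T_0),\ldots$ remains inside $\SSKT(a)$ and, under $\phi$, corresponds exactly to the $e_{n-i}$-chain through $\phi(T_0)$ in $B(\lambda)$; (iii) these chains partition $\SSKT(a)$, which follows from the definition of $\mathcal{E}_i$ combined with (i).

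The main obstacle is the non-truncation assertion in (ii): I must show that the row-index cap never prematurely kills $f_i$ along the chain, so that the $f_i$-chain in $\SSKT(a)$ has the same length as the $e_{n-i}$-chain through $\phi(T_0)$ in $B(\lambda)$. To handle this, I would argue that when $a_i<a_{i+1}$ and $T_0$ is $e_i$-highest in $\SSKT(a)$, every $i$ that gets changed to $i+1$ by a subsequent $f_i$ in the chain sits above an $i+1$ in the same column, paralleling the alternating-string analysis in the proof of Proposition~\ref{prop:key-well}, and hence lies in a row strictly above row $i+1$. Lemma~\ref{lem:cross} is used to propagate this condition as we iterate $f_i$, ruling out the ``entry in row $i$'' obstruction from Definition~\ref{def:key-lower}. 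Once (i)--(iii) are in place, the intertwining of the remaining crystal operators $e_j, f_j$ for $j\neq i$ through $\phi$ shows that $\SSKT(a)$, as a crystal, realizes $\mathfrak{D}_i B_{w'}(\lambda) = B_w(\lambda)$, completing the induction.
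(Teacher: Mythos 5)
Your overall strategy---a direct induction on $\ell(w)$ that verifies the recursion $B_w(\lambda)=\mathfrak{D}_i B_{w'}(\lambda)$ string by string---is genuinely different from the paper's argument. The paper never decomposes $\SSKT(a)$ into $i$-strings: it proves only the single containment $\phi(\SSKT(a))\subseteq B_w(\lambda)$ by applying $\mathcal{E}_w$ to raise an arbitrary $T$ to the unique element of $\SSKT(\lambda)$, and then deduces equality from the character identity $\sum_{T\in\SSKT(a)}x^{\wt(T)}=\key_a=\operatorname{ch}B_w(\lambda)$ (Theorem~\ref{thm:key-SSKT} combined with the Littelmann--Kashiwara theorem). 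Because you forgo this character count, your induction must establish \emph{both} containments structurally, and that is where it breaks down.

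The gap is in your sub-claim (i). Lemma~\ref{lem:CSM} runs in only one direction: it starts from $T\in\SSKT(a)$ with $e_i(T)=0$ and produces $S\in\SSKT(s_ia)$ with $\phi(T)=\phi(S)$. Together with the injectivity of $\phi$ this gives an \emph{injection} from the $e_i$-highest elements of $\SSKT(a)$ into $\SSKT(a')$, which (with your (ii) and (iii)) yields only the containment of $\SSKT(a)$ inside the union of full $i$-strings through $\SSKT(a')$, i.e.\ one half of the identification with $\mathfrak{D}_i B_{w'}(\lambda)$. What you actually assert---that for each $T'\in\SSKT(a')$ there exists $T_0\in\SSKT(a)$ with $e_i(T_0)=0$ and $\phi(T_0)=\phi(T')$---is the \emph{surjectivity} of this map, equivalently the reverse containment $B_w(\lambda)\subseteq\phi(\SSKT(a))$, and it is supplied neither by Lemma~\ref{lem:CSM} nor by anything else you cite. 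It is true, but proving it requires either inverting the construction in Lemma~\ref{lem:CSM} (push the outer $a_{i+1}-a_i$ cells of row $i$ back up to row $i+1$ and verify both the key-tableau conditions and $e_i=0$ for the result) or falling back on the character count, which is precisely the step the paper uses instead. A secondary soft spot is in (ii): your claim that every $i$ changed to $i+1$ along the $f_i$-chain sits above an $i+1$ in its column cannot be literally correct, since each application of $f_i$ changes the weight and therefore must convert exactly one $i$ having no $i+1$ in its column (the exceptional cell isolated in the proof of Proposition~\ref{prop:key-well}); it is exactly this cell whose row index must be controlled to rule out the ``entry in row $i$'' truncation, and your sketch does not address it.
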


\begin{proof}
  Given $T \in \SSKT(a)$, for $w$ the permutation that sorts $a$ to partition shape $\lambda$, we necessarily 
  have $\mathcal{E}_w(T) \in \SSKT(\lambda)$. However, the constraint that entries cannot exceed their row index 
  together with distinct column values forces $\SSKT(\lambda)$ to have a single element, the tableau with all entries 
  in row $i$ equal to $i$. In particular, this element maps via the column sorting map to the highest weight $u_{\lambda}$. 
  By Lemma~\ref{lem:commute}, this means $T \in \mathfrak{D}_w \{ u_{\lambda} \}$ for every $T \in \SSKT(a)$, and so 
  $\phi(\SSKT(a)) \subseteq B_w(\lambda)$. By Theorem~\ref{thm:key-SSKT}, the sums of the weights on both sides 
  agree, so we must have equality.
\end{proof}

For example, removing the four vertices of the $(2,2,1)$-crystal in Figure~\ref{fig:Young-221} corresponding to the 
four semi-standard Young tableaux of shape $(2,2,1)$ that are not in the image of the column sorting map on 
semi-standard key tableaux of shape $(0,2,1,2)$ precisely gives the $(0,2,1,2)$-Demazure crystal in Figure~\ref{fig:Key-0212}.

%
\section{Crystal structure for Stanley symmetric polynomials}
%
\label{sec:stanley}

We review the combinatorics of Stanley symmetric functions and polynomials in terms of reduced factorizations of 
a permutation in Section~\ref{sec:RF}. We proceed in Section~\ref{sec:EG} to review Edelman--Greene insertion
and review the crystal structure on reduced factorizations as recently introduced in~\cite{MS16} in
Section~\ref{sec:RF-crystal}. 

%
\subsection{Combinatorics of Stanley symmetric functions}
\label{sec:RF}

Stanley \cite{Stanley.1984} introduced a new family of symmetric functions to enumerate reduced expressions for permutations.

\begin{definition}
  A \defn{reduced word} for a permutation $w\in S_n$ is a word $i_1 \ldots i_k$ such that $s_{i_1} \cdots s_{i_k} = w$ where 
  $k$ is the inversion number of $w$.
  \label{def:reduced}
\end{definition}

For example, there are $11$ reduced words for the permutation $153264$ as shown in Figure~\ref{fig:reduced}.

\begin{figure}[ht]
  \begin{displaymath}
    \begin{array}{ccccccccccc}
      45323 & 45232 & 43523 & 42532 & 43253 & 24532 & 42352 & 43235 & 24352 & 42325 & 24325
    \end{array}
  \end{displaymath}
   \caption{\label{fig:reduced}The reduced words for $153264$.}
\end{figure}

\begin{definition}
  Given a reduced word $\rho$, an \defn{increasing factorization} for $\rho$ partitions the word $\rho$ into 
  (possibly empty) blocks (or factors) such that entries increase left to right within each block. 
  
  Given a permutation $w$, a \defn{reduced factorization} for $w$ is an increasing factorization of a reduced word for $w$. 
  Denote the set of reduced factorizations for $w$ by $\RF(w)$. 
   \label{def:factor}
\end{definition}

For example, the reduced factorizations for $153264$ into $4$ blocks are shown in Figure~\ref{fig:factor}.

\begin{figure}[ht]
  \begin{displaymath}
    \begin{array}{cccccc}
      ()(45)(3)(23) & ()(45)(23)(2) & ()(4)(35)(23) & (4)(25)(3)(2) & (4)(3)(25)(3) & ()(245)(3)(2) \\ 
      (4)(5)(3)(23) & (4)(5)(23)(2) & (4)()(35)(23) &               &               & (2)(45)(3)(2) \\ 
      (45)()(3)(23) & (45)()(23)(2) & (4)(3)(5)(23) &               &               & (24)(5)(3)(2) \\ 
      (45)(3)()(23) & (45)(2)(3)(2) & (4)(35)()(23) &               &               & (245)()(3)(2) \\ 
      (45)(3)(2)(3) & (45)(23)()(2) & (4)(35)(2)(3) &               &               & (245)(3)()(2) \\ 
      (45)(3)(23)() & (45)(23)(2)() & (4)(35)(23)() &               &               & (245)(3)(2)() \\ \\
      ()(4)(235)(2) & ()(4)(3)(235) & ()(24)(35)(2) & ()(4)(23)(25) & ()(24)(3)(25) & \\ 
      (4)()(235)(2) & (4)()(3)(235) & (2)(4)(35)(2) & (4)()(23)(25) & (2)(4)(3)(25) & \\ 
      (4)(2)(35)(2) & (4)(3)()(235) & (24)()(35)(2) & (4)(2)(3)(25) & (24)()(3)(25) & \\ 
      (4)(23)(5)(2) & (4)(3)(2)(35) & (24)(3)(5)(2) & (4)(23)()(25) & (24)(3)()(25) & \\ 
      (4)(235)()(2) & (4)(3)(23)(5) & (24)(35)()(2) & (4)(23)(2)(5) & (24)(3)(2)(5) & \\ 
      (4)(235)(2)() & (4)(3)(235)() & (24)(35)(2)() & (4)(23)(25)() & (24)(3)(25)() &    
    \end{array}
  \end{displaymath}
   \caption{\label{fig:factor}The reduced factorizations for $153264$ into $4$ blocks.}
\end{figure}

The \defn{weight} of a reduced factorization $r$, denoted by $\wt(r)$, is the weak composition whose $i$th part is 
the number of letters in the $i$th block of $r$ from the right. For example, $\wt((45)(3)(23)()) = (0,2,1,2)$.

\begin{definition}
  The \defn{Stanley symmetric function} indexed by the permutation $w$ is
  \begin{equation}
    F_w(x) = \sum_{r\in \RF(w^{-1})} x^{\wt(r)}.
  \end{equation}
  \label{def:stanley}
\end{definition}

Therefore we compute $F_{143625}$ using reduced factorizations for $143625^{-1} = 153264$.

Note that reduced factorizations can, in principle, have an arbitrary number of blocks and hence $F_w(x)$ is 
a symmetric function in infinitely many variables $x=(x_1,x_2,\ldots)$.

We can restrict Stanley symmetric functions to Stanley symmetric polynomials by restricting the number of blocks
 in the reduced factorizations. Let $\RF^\ell(w)$ be the set of reduced factorizations of $w$ with precisely $\ell$ blocks. 
 Then the \defn{Stanley symmetric polynomial} in $\ell$ variables is
\[
	F_w(x_1,x_2,\ldots,x_\ell) = \sum_{r\in \RF^\ell(w^{-1})} x^{\wt(r)}.
\]

%
\subsection{Edelman--Greene correspondence}
\label{sec:EG}

In their study of Stanley symmetric functions, Edelman and Greene \cite{EG.1987} developed the following insertion 
algorithm that they used to give a formula for the Schur expansion of Stanley symmetric functions.

\begin{definition}\cite[Definition~6.21]{EG.1987}
  Let $P$ be a Young tableau, and let $x$ be a positive integer. Let $P_i$ be the $i$th lowest row of $P$. Define 
  the \defn{Edelman-Greene insertion of $x$ into $P$}, denoted by $P \leftarrow x$, as follows. Set 
  $x_0=x$ and for $i\geqslant 0$, insert $x_i$ into $P_{i+1}$ as follows. If $x_i \geqslant z$ for all $z\in P_{i+1}$, place $x_i$ 
  at the end of $P_{i+1}$ and stop. Otherwise, let $x_{i+1}$ denote the smallest element of $P_{i+1}$ such that 
  $x_{i+1}>x_i$. If $x_{i+1} \neq x_i+1$ or $x_i$ is not already in $P_{i+1}$, replace $x_{i+1}$ by $x_i$ in $P_{i+1}$ 
  and continue (we say that $x_i$ \defn{bumps} $x_{i+1}$ in row $i+1$). Otherwise leave $P_{i+1}$ unchanged and 
  continue with $x_{i+1}$.
  \label{def:insert-EG}
\end{definition}

Given a reduced expression $\rho$, define the \defn{insertion tableau} for $\rho$, denoted by $P(\rho)$, to be the 
result of inserting the word for $\rho$ letter by letter into the empty tableau. To track the growth of $P(\rho)$, define 
the \defn{recording tableau} for $\rho$, denoted by $Q(\rho)$, to be the result of adding $i$ into the new cell created 
when inserting the $i$th letter. For example, Figure~\ref{fig:EG} constructs the insertion tableau (top) and recording 
tableau (bottom) for the reduced expression $45232$.

\begin{figure}[ht]
  \begin{displaymath}
    \begin{array}{c@{\hskip\cellsize}c@{\hskip\cellsize}c@{\hskip\cellsize}c@{\hskip\cellsize}c@{\hskip\cellsize}c}
      \raisebox{-2\cellsize}{$\vline\tableau{\\\hline}$} &
      \tableau{\\ \\ 4} &
      \tableau{\\ \\ 4 & 5} &
      \tableau{\\ 4 \\ 2 & 5} &
      \tableau{\\ 4 & 5 \\ 2 & 3} &
      \tableau{4 \\ 3 & 5 \\ 2 & 3} \\ \\
      \raisebox{-2\cellsize}{$\vline\tableau{\\\hline}$} &
      \tableau{\\ \\ 1} &
      \tableau{\\ \\ 1 & 2} &
      \tableau{\\ 3 \\ 1 & 2} &
      \tableau{\\ 3 & 4 \\ 1 & 2} &
      \tableau{5 \\ 3 & 4 \\ 1 & 2}
    \end{array}
  \end{displaymath}
  \caption{\label{fig:EG}The insertion and recording tableaux for the reduced expression $45232$.}
\end{figure}

\begin{theorem}\cite[Theorem~6.25]{EG.1987}
  The Edelman--Greene correspondence $\rho \mapsto (P(\rho),Q(\rho))$ is a bijection between reduced 
  expressions and all pairs of tableaux $(P,Q)$ such that $P$ and $Q$ have the same shape, $P$ is increasing 
  with $\mathrm{row}(P)$ a reduced word, and $Q$ is standard.
  \label{thm:insert-EG}
\end{theorem}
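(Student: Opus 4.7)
The plan is to follow the standard RSK-style template: construct an explicit inverse (reverse Edelman--Greene insertion) guided by the recording tableau, show both directions are well-defined, and verify they are mutual inverses. The whole argument proceeds by induction on the length of the reduced word $\rho$.

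First I would treat the forward map. By induction on the length $k$ of $\rho$, assume that after inserting the first $k-1$ letters we have $P$ increasing with $\mathrm{row}(P)$ a reduced word for some permutation $v$, and a standard recording tableau $Q$ of the same shape. Inserting the $k$th letter $x$ produces a bumping path, and the new cell appended to the shape receives label $k$ in $Q$, so $Q$ remains standard. One then checks row by row that the bumping rule maintains strict increase within rows and columns (this is where the case $x_{i+1}=x_i+1$ with $x_i\in P_{i+1}$ is critical: the \emph{skip} move avoids creating a duplicate in the row above while still producing a valid row word). The key claim to verify in the forward direction is that the new row-reading word is again reduced; this amounts to checking that the elementary transformation performed on the row word by a single bump is a braid or commutation move (or does nothing to the underlying permutation except multiplication by $s_x$), i.e.\ replaces a pattern $a,a+1,a$ by $a+1,a,a+1$ or swaps a non-adjacent commuting pair. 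This braid-relation bookkeeping is the main obstacle of the proof.

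Next I would construct the inverse. Given $(P,Q)$ as in the theorem, let $c$ be the cell of $Q$ containing the maximum entry $k$. Starting from $P_c$, perform reverse bumping row-by-row downward: at each step, let $y$ be the entry in the current row, find the largest entry $z$ in the row below with $z<y$, and either replace $z$ by $y$ (if $z\neq y-1$ or the row below does not already contain $y$) or skip (in the analogous exceptional case). The letter ejected from the bottom row is the $k$th letter of $\rho$; removing it and replacing $P$ by the result and $Q$ by $Q$ with $k$ erased gives, by induction, a reduced expression of length $k-1$. The well-definedness of this reverse procedure requires showing that at each row the uniquely determined $z$ exists and that the reverse skip condition mirrors the forward skip condition; this follows from the strict row/column increase and the reducedness of $\mathrm{row}(P)$.

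Finally I would verify that the forward and reverse maps compose to the identity. By construction each insertion step is locally reversed by the corresponding reverse-bumping step in the same row, so a single induction on $k$ finishes both compositions simultaneously; the surjectivity onto pairs $(P,Q)$ with the stated properties is exactly the content of the reverse map being always defined on such pairs. I expect that the crux of the whole argument, and the only place that is not routine RSK-style bookkeeping, is the verification that the skip rule is precisely what is needed to translate each individual bump into a single braid relation on the reduced word, so that reducedness is preserved on both sides.
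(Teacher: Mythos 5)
This statement is quoted verbatim from Edelman and Greene \cite[Theorem~6.25]{EG.1987}; the paper you are reading gives no proof of it, so there is no internal argument to compare yours against. Judged on its own, your outline follows the same route as the original Edelman--Greene proof (forward insertion, reverse bumping guided by the maximal entry of $Q$, induction on length), and you correctly identify the skip rule as the non-routine ingredient. The one place where your sketch glosses over the real difficulty is the claim that a single bump ``replaces a pattern $a,a+1,a$ by $a+1,a,a+1$ or swaps a non-adjacent commuting pair'': the relationship between $\mathrm{row}(P\leftarrow x)$ and $\mathrm{row}(P)\,x$ is not a single elementary move but a chain of Coxeter--Knuth relations, and establishing that $\mathrm{row}(P\leftarrow x)$ is Coxeter--Knuth equivalent to $\mathrm{row}(P)\,x$ (hence reduced for the same permutation) is the technical heart of Edelman and Greene's argument, occupying several lemmas. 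Likewise, for surjectivity you must show that reverse bumping applied to an \emph{arbitrary} increasing $P$ with reduced row word (not merely one known to arise from insertion) ejects a letter $x$ and leaves a tableau whose row word is still reduced and of length one less; this again rests on the Coxeter--Knuth machinery rather than on ``strict row/column increase'' alone. So the architecture is right, but the proof is not complete until that equivalence is established.
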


We may extend the Edelman--Greene correspondence to a bijection between reduced factorizations and all pairs 
of tableaux $(P,Q)$ such that $P$ and $Q$ have the same shape, $P$ is increasing with $\mathrm{row}(P)$ a 
reduced word, and $Q$ is semi-standard. To do so, given a reduced factorization $r$ into $\ell$ blocks, define 
$P(r)$ to be $P(\rho)$ where $\rho$ is the underlying reduced expression for $r$, and define $Q(r)$ to be the 
result of adding $\ell-i+1$ into each new cell created when inserting a letter from block $i$ (from the right). For example, the 
recording tableau for the reduced factorization $(4)(5)(23)(2)$ is constructed in Figure~\ref{fig:EG-factor}.

\begin{figure}[ht]
  \begin{displaymath}
    \begin{array}{c@{\hskip\cellsize}c@{\hskip\cellsize}c@{\hskip\cellsize}c@{\hskip\cellsize}c@{\hskip\cellsize}c}
      \raisebox{-2\cellsize}{$\vline\tableau{\\\hline}$} &
      \tableau{\\ \\ 1} &
      \tableau{\\ \\ 1 & 2} &
      \tableau{\\ 3 \\ 1 & 2} &
      \tableau{\\ 3 & 3 \\ 1 & 2} &
      \tableau{4 \\ 3 & 3 \\ 1 & 2}
    \end{array}
  \end{displaymath}
  \caption{\label{fig:EG-factor}The recording tableau for the reduced factorization $(4)(5)(23)(2)$.}
\end{figure}

\begin{corollary}
  The correspondence $r \mapsto (P(r),Q(r))$ is a bijection between reduced factorizations and all pairs of 
  tableaux $(P,Q)$ such that $P$ and $Q$ have the same shape, $P$ is increasing with $\mathrm{row}(P)$ a 
  reduced word, and $Q$ is semi-standard. Moreover, if $r$ has $\ell$ blocks, then $\wt(Q(r))_i = \wt(r)_{\ell-i}$.
  \label{cor:insert-factor}
\end{corollary}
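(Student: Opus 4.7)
The plan is to deduce both claims from Theorem~\ref{thm:insert-EG} by viewing a reduced factorization $r$ as its underlying reduced word $\rho$ together with a choice of block decomposition. The insertion step ignores block boundaries entirely, so $P(r) = P(\rho)$ already satisfies the desired properties (increasing, with its row reading a reduced word) by Theorem~\ref{thm:insert-EG}. The recording tableau $Q(r)$ automatically has the same shape as $P(r)$ because Edelman--Greene insertion creates exactly one new cell per inserted letter, and the claimed weight identity follows at once from the recording rule, which assigns a single common label to every cell created while inserting the letters from a fixed block.

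The main content is verifying that $Q(r)$ is semi-standard. Weak increase of labels along rows follows directly from the left-to-right order of insertion, together with the fact that blocks further to the left carry strictly smaller labels. Strict increase down columns reduces to the key claim that the cells newly created during insertion of a single block form a horizontal strip. Since letters within a block are strictly increasing, this is the Edelman--Greene analogue of the standard RSK fact that inserting a strictly increasing sequence appends a horizontal strip. I would establish it by induction on row, analyzing the two branches of Definition~\ref{def:insert-EG}: if two consecutive block letters $a < b$ both bump in the current row, one checks that the bumped values again form a strictly increasing pair ready to be inserted into the next row; if the ``do-nothing'' branch is triggered because $b = a+1$ is already present, one verifies that $b$'s subsequent insertion chain still terminates in a cell lying strictly to the right of the cell created by $a$. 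This inductive bookkeeping across rows is the step I expect to be the main technical obstacle, since the interaction between the ordinary bumping branch and the do-nothing branch has to be tracked uniformly.

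For the bijection, I would construct an explicit inverse. Given a pair $(P, Q)$ satisfying the hypotheses, semi-standardness of $Q$ guarantees that for each label $k$ the cells of $Q$ carrying that label form a horizontal strip. Peeling off labels from largest to smallest and, within each label, reverse-bumping the cells in right-to-left order using inverse Edelman--Greene insertion yields, by the horizontal-strip property run in reverse, a strictly increasing sequence of letters at each label; assembling these sequences into the corresponding blocks (with empty blocks for absent labels) recovers a reduced factorization. That this map inverts $r \mapsto (P(r), Q(r))$ follows from reversibility of the Edelman--Greene bumping rule together with Theorem~\ref{thm:insert-EG} applied to the underlying concatenated reduced expression, which also certifies that the output is a genuine reduced word and hence a valid reduced factorization.
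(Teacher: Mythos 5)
The paper gives no proof of this corollary at all: it is asserted as the routine passage from the standard to the semi-standard version of the Edelman--Greene correspondence, with Theorem~\ref{thm:insert-EG} quoted as the standard case and the recording rule for blocks simply defined. Your proposal fills in exactly the argument the paper leaves implicit, and it is the right one: reduce to the underlying reduced word for $P$, observe that the shape and weight claims are immediate from the recording rule, and isolate the one real point, namely that inserting the strictly increasing letters of a single block creates a horizontal strip, which is what makes $Q(r)$ semi-standard and makes the peel-off-by-largest-label inverse well defined. The "main technical obstacle" you flag -- tracking the interaction between the bumping branch and the do-nothing branch of Definition~\ref{def:insert-EG} -- is genuine if you reprove it from scratch, but you do not need to: Edelman and Greene already establish the descent compatibility of their correspondence (if $\rho_j < \rho_{j+1}$ then the cell labelled $j+1$ in $Q(\rho)$ lies strictly right of and in a weakly lower row than the cell labelled $j$), and the horizontal-strip property for a block is precisely the iterated form of that statement; citing it would shortcut your induction. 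One small discrepancy worth noting, independent of your argument: the recording rule places label $\ell-i+1$ on cells coming from block $i$, so the weight identity should read $\wt(Q(r))_i = \wt(r)_{\ell-i+1}$ (as the example $Q((4)(5)(23)(2))$ in Figure~\ref{fig:EG-factor} confirms); the index $\ell-i$ in the statement is off by one, and your "follows at once from the recording rule" would, if written out, produce the corrected index.
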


For example, the Edelman--Greene correspondence gives a weight-reversing bijection
\[ 
	\RF^\ell(153264) \rightarrow \left( \raisebox{\cellsize}{$\tableau{4 \\ 3 & 5 \\ 2 & 3}$} \times \SSYT_\ell(2,2,1) \right) 
	\bigcup \left( \raisebox{\cellsize}{$\tableau{4 \\ 3 \\ 2 & 3 & 5}$} \times \SSYT_\ell(3,1,1) \right).
 \]
In particular, by the symmetry of Schur functions, we have the following expansion from \cite{EG.1987}.

\begin{corollary}
  The Stanley symmetric function for $w$ may be expressed as
  \begin{equation}
    F_{w}(x) = \sum_{T \in \mathrm{Yam}(w^{-1})} s_{\operatorname{sh}(T)}(x),
  \end{equation}
  where $\mathrm{Yam}(w^{-1})$ is the set of insertion tableaux with $\mathrm{row}(P)$ a reduced word for $w^{-1}$.
\end{corollary}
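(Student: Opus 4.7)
The plan is to deduce the corollary directly from the weight-reversing bijection established in Corollary~\ref{cor:insert-factor}. Starting from the definition
\[
  F_w(x) = \sum_{r \in \RF(w^{-1})} x^{\wt(r)},
\]
I would group the sum over reduced factorizations $r$ of $w^{-1}$ according to their insertion tableau $P(r)$. By Corollary~\ref{cor:insert-factor}, the set $\mathrm{Yam}(w^{-1})$ of possible insertion tableaux is exactly the set of increasing tableaux $P$ whose row reading word is a reduced word for $w^{-1}$, and for each fixed $P$ the recording tableau $Q(r)$ ranges bijectively over $\SSYT(\operatorname{sh}(P))$.

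Next I would handle the weight transfer. Corollary~\ref{cor:insert-factor} asserts that $\wt(Q(r))_i = \wt(r)_{\ell - i}$ when $r$ has $\ell$ blocks, so $x^{\wt(r)}$ becomes the monomial in $Q(r)$ with reversed indexing. Summing over $Q \in \SSYT(\operatorname{sh}(P))$ therefore produces $s_{\operatorname{sh}(P)}$ in the reversed variables. The key observation is that Stanley symmetric functions are symmetric (one may either appeal to the symmetry of the Schur function, or equivalently allow $\ell$ to tend to infinity so that the reversal becomes immaterial in the limit). Consequently, for each fixed $P$,
\[
  \sum_{Q \in \SSYT(\operatorname{sh}(P))} x^{\wt(Q)} = s_{\operatorname{sh}(P)}(x),
\]
and summing over $P \in \mathrm{Yam}(w^{-1})$ yields the claimed identity.

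The only subtle point, which I expect to be the sole obstacle worth spelling out, is the index reversal between $\wt(r)$ and $\wt(Q(r))$. In the polynomial (finite $\ell$) version this makes the natural identity read $F_w(x_\ell, \ldots, x_1) = \sum_P s_{\operatorname{sh}(P)}(x_1, \ldots, x_\ell)$; passing to the infinite-variable Stanley symmetric function or invoking symmetry of the Schur polynomial converts this into the stated unindexed equality. No further combinatorial work is needed beyond Corollary~\ref{cor:insert-factor}.
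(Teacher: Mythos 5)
Your proposal is correct and follows essentially the same route as the paper: the paper likewise deduces the expansion from the weight-reversing bijection of Corollary~\ref{cor:insert-factor}, grouping reduced factorizations by their insertion tableau and invoking the symmetry of Schur functions to dispose of the index reversal. Your explicit remark about the finite-$\ell$ statement $F_w(x_\ell,\ldots,x_1)=\sum_P s_{\operatorname{sh}(P)}(x_1,\ldots,x_\ell)$ is a correct elaboration of the same point.
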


For example, we have
\begin{equation}
\label{equation.F expansion}
	F_{143625}(x) = s_{(2,2,1)}(x) + s_{(3,1,1)}(x). 
\end{equation}

%
\subsection{Crystal operators on reduced factorizations}
\label{sec:RF-crystal}

Following~\cite{MS16}, we are going to define an $A_{\ell-1}$-crystal structure on $\RF^\ell(w)$.
Let $r=r^\ell r^{\ell-1} \cdots r^1 \in \RF^\ell(w)$, where $r^i$ is the $i$th block from the right.
The Kashiwara raising and lowering operators $e_i$ and $f_i$ only act on the blocks $r^{i+1} r^i$. The action is
defined by first bracketing certain letters and then moving an unbracketed letter from one factor to the other.
Let us begin by describing the bracketing procedure. Start with the largest letter $b$ in $r^i$
and pair it with the smallest $a>b$ in $r^{i+1}$. If no such $a$ exists in $r^{i+1}$, then
$b$ is unpaired.  The pairing proceeds in decreasing order on elements of $r^i$, and with each iteration 
previously paired letters of $r^{i+1}$ are ignored. Define
$$
R_i(r^\ell \cdots r^1)=
\{
b\in r^i \mid
b  \text{ is unpaired in the $r^{i+1}r^i$-pairing}
\}
$$
and
$$
L_i(r^\ell \cdots r^1)=
\{
b\in r^{i+1} \mid
b  \text{ is unpaired in the $r^{i+1}r^i$-pairing}
\}\;.
$$

Then $f_i(r^\ell \cdots r^1)$ is defined by replacing the blocks $r^{i+1} r^i$ by $\widetilde r^{i+1} \widetilde r^i$ such that
\newcommand{\cont}{cont}
$$
\widetilde r^i=r^i\backslash\{b\}\quad\text{and} 
\quad \widetilde r^{i+1}=r^{i+1}\cup\{b-t\}
$$
for $b=\min(R_i(r^\ell \cdots r^1))$ and
$t=\min\{j\geqslant 0\mid b-j-1\not\in r^i\}$.
If $R_i(r^\ell \cdots r^1)=\emptyset$, then $f_i(r^\ell \cdots r^1)= 0$.

Similarly, $e_i(r^\ell \cdots r^1)$ is defined by replacing the factors $r^{i+1} r^i$ by $\widetilde r^{i+1} \widetilde r^i$ such that
$$
\widetilde r^i=r^i\cup\{a+s\}\quad\text{and}\quad
r^{i+1}=r^{i+1}\backslash\{a\}
$$ 
for $a=\max(L_i(r^\ell \cdots r^1))$ and
$s=\min\{j\geqslant 0\mid a+j+1\not\in r^{i+1}\}$.
If $L_i(r^\ell \cdots r^1)=\emptyset$, then $e_i(r^\ell \cdots r^1)= 0$.

\begin{example}
Let $(2)(13)(23) \in \RF^3(w)$ for $w= s_2 s_1 s_3 s_2s_3 \in S_4$.
To apply $f_1$ we need to first bracket the letters in $r^1 = 23$ with those in
$r^2 = 13$. The letter 3 in $r^1$ is unbracketed since there is no bigger letter in
$r^2$, but the letter 2 in $r^1$ is bracketed with 3 in $r^2$. Hence 
$b = \min(R_1(r^3 r^2 r^1))=3$ and $t=\min\{j\geqslant 0\mid b-j-1\not\in r^1\}=1$.
Therefore, $f_1((2)(13)(23)) = (2)(123)(2)$.
Similarly, $e_1((2)(13)(23)) = (2)(3)(123)$.
\end{example}

\begin{remark}
In~\cite{MS16}, the Stanley symmetric function $F_w$ is defined using decreasing factorizations of reduced words
of $w$. Here we use increasing factorizations of $w^{-1}$. To relate the two, one needs to revert the reduced
factorizations. The crystal structures are related by interchanging $f_i$ (resp. $e_i$) with $e_{\ell-i}$ (resp. $f_{\ell-i}$).
\end{remark}

\begin{theorem} \cite[Theorem 3.5]{MS16}
\label{theorem.crystal}
The above defined operators $f_i$ and $e_i$ for $1\leqslant i<\ell$ and the weight function $\wt$ define a
$A_{\ell-1}$-crystal structure on $\RF^\ell(w)$.
\end{theorem}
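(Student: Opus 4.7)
The plan is to verify the four defining properties of an $A_{\ell-1}$-crystal on $\RF^\ell(w)$: well-definedness of $f_i$ and $e_i$ as maps $\RF^\ell(w)\to\RF^\ell(w)\cup\{0\}$, the partial inverse property $f_i(r)=r'$ if and only if $e_i(r')=r$, the weight transformation $\wt(f_i(r))=\wt(r)-\alpha_i$, and the crystal axioms themselves. My strategy is to leverage the Edelman--Greene correspondence of Corollary~\ref{cor:insert-factor} to transfer the crystal structure on semi-standard Young tableaux from Section~\ref{sec:SSYT} to $\RF^\ell(w)$.

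First I would establish well-definedness by showing that the word underlying $f_i(r)$ is still a reduced word for the same permutation $w$. The integer $t=\min\{j\geqslant 0 \mid b-j-1\notin r^i\}$ is chosen precisely so that $b-t,b-t+1,\ldots,b-1,b$ forms the maximal consecutive run in $r^i$ ending at $b$. Removing $b$ from block $i$ and inserting $b-t$ into block $i+1$ corresponds at the reduced-word level to repeatedly applying commutation and braid relations among $s_{b-t},\ldots,s_b$, leaving the underlying permutation unchanged; the same choice of $t$ also ensures the resulting blocks remain strictly increasing. The partial inverse property and weight transformation are then bookkeeping: the inserted letter $b-t$ becomes the unique maximal unbracketed letter of the new block $r^{i+1}$, so applying $e_i$ recovers $r$, while one letter exits block $i$ and one enters block $i+1$, which is exactly the shift by $-\alpha_i$.

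The main step, and the principal obstacle, is to verify the crystal axioms. The cleanest route is to show that under the bijection $r\mapsto(P(r),Q(r))$ of Corollary~\ref{cor:insert-factor}, the operators $f_i$ and $e_i$ on $\RF^\ell(w)$ preserve the insertion tableau $P(r)$ and act on the recording tableau $Q(r)$ precisely as the lowering and raising operators of Definitions~\ref{def:young-lower} and~\ref{def:young-raise}. Since those operators define an $A_{\ell-1}$-crystal by Kashiwara--Nakashima theory, the axioms transfer automatically to $\RF^\ell(w)$. The hard part of this intertwining is a case analysis showing that the bracketing rule on the blocks $r^{i+1}r^i$ matches the SSYT signature rule after Edelman--Greene insertion: an unbracketed letter of $r^i$ should track under EG insertion to a cell of $Q(r)$ with entry $\ell-i$ whose position realizes the maximum $M_i$ in the column reading word. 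Tracking how successive insertions of letters from $r^{i+1}$ and then $r^i$ interact with bumping paths to identify these cells is the technical heart of the argument, and constitutes the bulk of the proof in~\cite{MS16}.
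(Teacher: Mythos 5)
The first thing to note is that the paper does not prove this statement at all: Theorem~\ref{theorem.crystal} is quoted verbatim from \cite[Theorem 3.5]{MS16}, so there is no internal proof to compare against. Judged on its own terms, your outline describes a legitimate strategy --- establish well-definedness and the partial-inverse/weight properties directly, then transfer the crystal axioms through the Edelman--Greene correspondence --- but as written it has two genuine gaps. First, the well-definedness step is asserted rather than proved: you need to show that $b-t\notin r^{i+1}$ (so that $\widetilde r^{i+1}$ is again a strictly increasing block), that the new word is still reduced and represents the same $w$, and that the pairing of the remaining letters is preserved so that $e_i$ really inverts $f_i$. The braid identity $s_{b-t}\cdots s_{b-1}s_b = s_{b-t}\cdot s_{b-t}\cdots s_{b-1}$ on the maximal consecutive run is the right germ of the argument, but the interaction with the other letters of $r^i$ and $r^{i+1}$ (which need not be consecutive with the run) requires a careful case analysis that you do not supply.

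Second, and more seriously, the ``main step'' you identify --- that the bracketing rule on $r^{i+1}r^i$ matches the signature rule on $Q(r)$ after Edelman--Greene insertion --- is precisely \cite[Theorem 4.11]{MS16}, quoted in this paper as Theorem~\ref{theorem.crystal RF}. You cannot simply invoke that result here without checking for circularity: in \cite{MS16} the crystal structure (Theorem 3.5) is established first, by direct verification on the factorizations, and the intertwining with Edelman--Greene insertion is proved afterwards. Your route is salvageable --- one can prove $P(e_i(r))=P(r)$ and $Q(e_i(r))=f_{\ell-i}(Q(r))$ by tracking bumping paths without presupposing the crystal axioms, and then the axioms do transfer from $\SSYT_\ell(\lambda)$ via the bijection of Corollary~\ref{cor:insert-factor} (taking care with the index reversal $i\mapsto\ell-i$ and with the vanishing cases $f_i(r)=0$) --- but that insertion analysis is the entire content of the theorem, and your proposal explicitly defers it. As it stands, the proposal is a plan for a proof rather than a proof.
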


\begin{corollary} \cite{MS16}
  The Stanley symmetric function for $w$ may be expressed as
  \begin{equation}
    F_{w}(x) = \sum_{\substack{r \in \RF^\ell(w^{-1})\\ e_i r = 0 \quad \forall 1\leqslant i < \ell}} s_{\wt(r)}(x).
  \end{equation}
\end{corollary}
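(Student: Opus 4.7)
The proof proceeds by invoking Theorem~\ref{theorem.crystal}, which endows $\RF^\ell(w^{-1})$ with an $A_{\ell-1}$-crystal structure. By Definition~\ref{def:stanley}, the character of this crystal (with respect to the weight function $\wt$) is precisely the Stanley symmetric polynomial $F_w(x_1,\ldots,x_\ell)$, so the plan is to compute this character a second way by decomposing the crystal into connected components and identifying each as a highest weight Schur crystal.

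I would first observe that each connected component of $\RF^\ell(w^{-1})$ is isomorphic to a classical highest weight $A_{\ell-1}$-crystal $B(\lambda)$ for a unique partition $\lambda$. This decomposition is a structural property of the crystal established in~\cite{MS16}, either via verification of Stembridge's local axioms for type $A$ or, more directly, via Edelman--Greene insertion: Corollary~\ref{cor:insert-factor} already supplies a set-theoretic bijection $r \mapsto (P(r), Q(r))$ between $\RF^\ell(w^{-1})$ and pairs $(P,Q)$ with $P \in \mathrm{Yam}(w^{-1})$ and $Q \in \SSYT_\ell(\operatorname{sh}(P))$. Transporting the tableau crystal structure on the $Q$-factor (with $P$ frozen) yields a crystal isomorphism, so the components are indexed by the Yamanouchi insertion tableaux, each component with character $s_{\operatorname{sh}(P)}(x_1,\ldots,x_\ell)$. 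In either approach, the unique highest weight element of each component is characterized by $e_i(r)=0$ for all $1\leqslant i<\ell$, and has weight equal to the partition $\lambda = \operatorname{sh}(P)$ indexing the component.

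Summing characters over components then yields
\[
F_w(x_1,\ldots,x_\ell) \;=\; \operatorname{ch} \RF^\ell(w^{-1}) \;=\; \sum_{\substack{r \in \RF^\ell(w^{-1}) \\ e_i(r)=0 \ \forall\, 1 \leqslant i < \ell}} s_{\wt(r)}(x_1,\ldots,x_\ell),
\]
and the identity for $F_w$ as a symmetric function in infinitely many variables follows by taking $\ell$ at least the length of $w$, since no partition appearing in the Schur expansion has more parts than $w$ has inversions. The principal obstacle is establishing that the crystal decomposes as a direct sum of highest weight components — this is not automatic from the bare definition of a crystal — but once Theorem~\ref{theorem.crystal} is in hand, this reduces either to the standard Stembridge recognition of type $A$ crystals or to the EG-crystal intertwining relation, after which the corollary is a character-theoretic bookkeeping argument.
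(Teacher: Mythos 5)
Your proof is correct and follows exactly the route the paper intends: the corollary is stated with a citation to \cite{MS16} rather than proved, and the decomposition you describe is precisely what Theorem~\ref{theorem.crystal} together with the Edelman--Greene intertwining of Theorem~\ref{theorem.crystal RF} and Corollary~\ref{cor:insert-factor} supplies, after which the identity is character bookkeeping as you say. The one detail you gloss over is that the intertwining sends $e_i$ on $\RF^\ell(w^{-1})$ to $f_{\ell-i}$ on the recording tableau rather than to $e_i$, so the ``transported'' structure is twisted by the Dynkin diagram flip and weight reversal; this is harmless because a highest weight $r$ then corresponds to the \emph{lowest} weight element of $B(\lambda)$, whose weight is $\mathrm{rev}(\lambda)$, and the weight reversal $\wt(Q(r))_i = \wt(r)_{\ell-i}$ of Corollary~\ref{cor:insert-factor} undoes this to give $\wt(r)=\lambda$, exactly as your argument requires.
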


For example, the highest weight reduced factorizations for $153264=143625^{-1}$ with $\ell=4$ are
$()(4)(35)(23)$ and $()(4)(3)(235)$ of weights $(2,2,1)$ and $(3,1,1)$, respectively, 
confirming~\eqref{equation.F expansion}.

It turns out that this crystal structure on reduced factorizations relates to the crystal structure on semi-standard 
Young tableaux via the Edelman--Greene correspondence. 

\begin{theorem} \cite[Theorem 4.11]{MS16}
\label{theorem.crystal RF}
  Given $r\in \RF^\ell(w)$, let $P(r)$ denote its Edelman--Greene insertion tableau and $Q(r)$ its Edelman--Greene 
  semi-standard recording tableau, where letters in block $i$ of $r$ are recorded by the letter $i$. Then, if $e_i(r) \neq 0$, 
  we have $P(e_i(r)) = P(r)$ and $Q(e_i(r)) = f_{\ell-i}(Q(r))$. 
\end{theorem}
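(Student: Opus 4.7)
The plan is to reduce to the two-block case and then match the bracketing procedure defining $e_i$ with the bumping pattern of Edelman--Greene insertion. Since $e_i$ only alters the blocks $r^{i+1}$ and $r^i$, it suffices to fix the intermediate tableau $P'$ produced by EG-inserting the earlier blocks $r^\ell, \ldots, r^{i+2}$, verify the claim at the level of inserting $r^{i+1}$ followed by $r^i$ into $P'$, and then observe that the subsequent insertions of $r^{i-1}, \ldots, r^1$ contribute identically to $Q$ on either side provided that $P$ is preserved at the two-block stage. This reduction is possible because EG insertion processes letters strictly in their left-to-right order and the blocks unchanged by $e_i$ are either entirely before or entirely after the active pair.

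The core technical lemma identifies bracketed pairs $(b \in r^i, a \in r^{i+1})$ with bumping events under EG insertion: when $r^i$ is inserted into $P'' := r^{i+1} \cdot P'$, each bracketed $b$ initiates a bumping chain that displaces its paired $a$ upward out of the newly added bottom strip (modulo the no-bump rule), while unbracketed letters of $r^i$ append to that bottom row and unbracketed letters of $r^{i+1}$ stay in place. I would prove this by induction on $|r^i|$, processing the letters of $r^i$ in decreasing order so that the steps mirror the greedy bracketing construction. The shift parameter $s = \min\{j \geqslant 0 : a + j + 1 \notin r^{i+1}\}$ in the definition of $e_i$ is precisely the data needed to ensure that replacing $a$ in $r^{i+1}$ with $a + s$ in $r^i$ triggers, via EG's no-bump rule on the consecutive run $a, a+1, \ldots, a+s$, the same chain of bumps as before. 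Consequently $P$ is unchanged, and the only effect on $Q$ is that the single cell previously recording $a$ with the block label for $r^{i+1}$ now records it with the block label for $r^i$; under the convention in which block $r^j$ of the factorization carries the recording label $\ell - j + 1$, this is exactly the SSYT crystal step $f_{\ell-i}$ acting on $Q$.

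The principal obstacle is EG insertion's no-bump rule, which has no analogue in classical RSK and is precisely what forces the consecutive-run parameter $s$ to appear in the definition of $e_i$. Without this rule, $e_i$ would simply move $a$; with it, one must verify that the greedy bracketing correctly identifies $a$ (rather than $a + s$) as the moved letter and that the new bumping chain compensates via the consecutive run. A case analysis on whether $a + 1 \in r^{i+1}$ and on whether $a$ sits at the top of a no-bump run handles this interaction, after which the identification of $Q(e_i(r))$ with $f_{\ell-i}(Q(r))$ follows from the lone-cell change together with a weight check.
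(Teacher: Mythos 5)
First, a point of comparison: the paper does not prove this statement at all --- it is imported verbatim as \cite[Theorem 4.11]{MS16} --- so there is no in-paper argument to measure your proposal against, and I can only assess it on its own terms. Your reduction to the two adjacent blocks $r^{i+1}r^i$ is sound: the earlier blocks fix the intermediate tableau $P'$, and once $P$ and the cell labels are controlled after inserting the active pair, the later blocks contribute identically on both sides. The difficulty is entirely in the two-block step, and there your proposal has two genuine gaps.

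The first is the ``core technical lemma.'' When a letter $b\in r^i$ is inserted into row $1$ of the intermediate tableau, it bumps the \emph{smallest entry of row $1$ exceeding $b$}; that entry need not be the partner $a\in r^{i+1}$ assigned to $b$ by the greedy bracketing, and need not come from $r^{i+1}$ at all, since row $1$ also contains (possibly already displaced) letters originating from $r^\ell,\ldots,r^{i+2}$. The no-bump rule can furthermore cause $b$ to displace nothing. So the asserted dictionary ``bracketed pair $=$ bumping event ejecting $a$ from the bottom strip'' is false as literally stated, and the true relationship between the factorization pairing and the EG bumping chains --- which is where essentially all of the work in the proof of this theorem lies --- is not established by your induction sketch. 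The second gap is the endgame. Even granting that exactly one recording cell changes its label from $\ell-i$ to $\ell-i+1$ and that the result is semistandard of the correct weight, this does not identify $Q(e_i(r))$ with $f_{\ell-i}(Q(r))$: a semistandard tableau can admit several distinct single-cell changes $j\mapsto j+1$, all preserving semistandardness and weight (for instance, shape $(2,1)$ with bottom row $1,j$ and a $j$ directly above the $1$ admits two such changes), whereas $f_j$ selects one specific cell via the signature rule on the reading word. You must additionally prove that the cell whose label changes sits at the position selected by $M_{\ell-i}(w(Q(r)),\cdot)$, i.e.\ that unpaired letters of $r^{i+1}$ under the factorization bracketing correspond to unpaired letters $\ell-i$ of $w(Q(r))$ under the tableau bracketing, with the extremal one matching the extremal one. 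A ``lone-cell change together with a weight check'' cannot supply this identification.
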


%
\section{Demazure crystal structure for Schubert polynomials}
%
\label{sec:RCF}

We review the combinatorial expression of Billey, Jockusch and Stanley~\cite{BJS93} for Schubert polynomials 
in terms of compatible sequences in Section~\ref{sec:schubert} and show that it can be reformulated
in terms of reduced factorizations with a cutoff condition. 
In Section~\ref{sec:EG-weak} we discuss the weak analog of the Edelman--Greene insertion presented in \cite{Ass-R}.
It turns out that the cut-off condition precisely amounts to a Demazure crystal structure as shown in 
Section~\ref{sec:RF-demazure}.

%
\subsection{Combinatorics of Schubert polynomials}
\label{sec:schubert}

Schubert polynomials are generalizations of Schur polynomials which represent cohomology classes of Schubert cycles 
in flag varieties. They were first introduced by Bernstein et al.~\cite{BGG.1973} and extensively studied by
Lascoux and Sch\"utzenberger~\cite{LS.1982}.

\begin{definition}[\cite{LS.1982}]
  Given a permutation $w$, the \defn{Schubert polynomial} for $w$ is given by
  \begin{equation}
    \mathfrak{S}_w(x) = \partial_{w^{-1} w_0}(x_1^{n-1} x_2^{n-2} \cdots x_{n-1}),
  \end{equation}
  where $w_0=n n-1 \ldots 21$ is the longest permutation of length $\binom{n}{2}$.
\end{definition}

The first proven combinatorial formula for Schubert polynomials, due to Billey, Jockusch and Stanley~\cite{BJS93}, is 
in terms of compatible sequences for reduced expressions.

\begin{definition}[\cite{BJS93}]
  For $\rho = \rho_1 \ldots \rho_k$ a reduced word, a sequence $\alpha=\alpha_1 \ldots \alpha_k$ of positive integers 
  is \defn{$\rho$-compatible} if $\alpha$ is weakly decreasing, $\alpha_j \leqslant \rho_j$, and $\alpha_j > \alpha_{j+1}$ 
  whenever $\rho_j > \rho_{j+1}$.
  \label{def:compatible}
\end{definition}

For example, seven of the reduced words for $153264$ have compatible sequences as shown in Figure~\ref{fig:compatible}.

\begin{figure}[ht]
  \begin{displaymath}
    \begin{array}{ccccccc}
      45323 & 45232 & 43523 & 43253 & 42352 & 43235 & 42325 \\\hline
      44322 & 44221 & 43322 & 43221 & 42221 & 43222 & 42211 \\
      44321 & 43221 & 43321 &       & 32221 & 43221 & 32211 \\
      44311 & 33221 & 43311 &       &       & 43211 &       \\
      44211 &       & 43211 &       &       & 43111 &       \\
      43211 &       & 42211 &       &       & 42111 &       \\
      33211 &       & 32211 &       &       & 32111 &
    \end{array}
  \end{displaymath}
   \caption{\label{fig:compatible}The compatible sequences for the reduced words for $153264$.}
\end{figure}

\begin{theorem}[\cite{BJS93}]
  The Schubert polynomial $\schubert_w(x)$ indexed by a permutation $w$ is given by
  \begin{equation}
    \schubert_{w}(x) = \sum_{\rho\in R(w^{-1})} \sum_{\alpha\in\mathrm{RC}(\rho)} x^{\alpha} ,
    \label{e:schubert}
  \end{equation}
  where $x^a$ is the monomial $x_1^{a_1} \cdots x_n^{a_n}$.
  \label{thm:schubert-RC}  
\end{theorem}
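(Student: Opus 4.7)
The plan is downward induction on $\ell(w)$, using the divided difference recurrence that defines $\schubert_w$. Write
\[
G_w(x) := \sum_{\rho \in R(w^{-1})} \sum_{\alpha \in \mathrm{RC}(\rho)} x^\alpha
\]
for the right-hand side of the claimed identity. Since $\schubert_{w_0}(x) = x_1^{n-1} x_2^{n-2} \cdots x_{n-1}$ and $\schubert_{ws_i} = \partial_i \schubert_w$ when $\ell(ws_i) = \ell(w) - 1$, it suffices to establish the analogous initial condition and recurrence for $G_w$.

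For the base case $w = w_0$, I would verify directly that $G_{w_0}(x) = x_1^{n-1} x_2^{n-2} \cdots x_{n-1}$. A $\rho$-compatible sequence $\alpha$ is weakly decreasing, pointwise bounded by $\rho$, and strictly decreases at every descent of $\rho$. For $\rho \in R(w_0)$, combining these constraints with positivity of entries and the total length $\binom{n}{2}$ forces the weight vector to be the staircase $(n-1, n-2, \ldots, 1)$, and a pigeonhole argument shows that exactly one pair $(\rho, \alpha)$ achieves this monomial, namely the reverse-lexicographic reduced word for $w_0$ paired with its unique compatible filling.

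For the inductive step, suppose $w(i) > w(i+1)$ so $\ell(ws_i) = \ell(w) - 1$. I would pass to the pipe dream (RC-graph) model of Bergeron--Billey, which is cryptomorphic to compatible sequence pairs: each $(\rho, \alpha)$ for $w^{-1}$ corresponds to an arrangement of crosses in the staircase triangle whose row $j$ records the positions $k$ with $\alpha_k = j$. In this model, $\partial_i$ acts only on rows $i$ and $i+1$; grouping pipe dreams into $s_i$-orbits according to their cross configuration in these two rows and applying $\partial_i$ to the orbit sum produces precisely the monomial sum of pipe dreams for $(ws_i)^{-1}$ obtained via a Bergeron--Billey ladder move.

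The principal obstacle is this orbit-by-orbit matching: one must show that $\partial_i$ applied to the generating function of each $s_i$-orbit of pipe dreams for $w^{-1}$ yields the generating function of the ladder-reduced orbit for $(ws_i)^{-1}$. Concretely this reduces to the computation $\partial_i(x_i^a x_{i+1}^b) = h_{a+b-1}(x_i, x_{i+1})$ for $a > b$, together with a sign-reversing involution on the ``non-reducible'' pipe dreams whose contributions to $\partial_i G_w$ must cancel. Once this local identity is checked against the ladder move, the induction closes and the theorem follows.
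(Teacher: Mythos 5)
The paper does not actually prove this theorem: it is imported verbatim from \cite{BJS93} (Billey--Jockusch--Stanley), so there is no internal argument to compare yours against. Judged on its own, your proposal follows a standard and in principle viable strategy (downward induction on $\ell(w)$ via $\schubert_{ws_i}=\partial_i\schubert_w$, with the combinatorics transported to pipe dreams), and your base case is sound: the run-length bound forces each value $v$ to occur exactly $n-v$ times in any compatible sequence for a reduced word of $w_0$, which pins down both $\alpha$ and $\rho$ uniquely and yields the staircase monomial.

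The genuine gap is the inductive step, which is asserted rather than proved. There is no natural ``$s_i$-orbit'' structure on pipe dreams: defining the correct partition of the pipe dreams for $w^{-1}$ into groups on which $\partial_i$ can be evaluated, showing the unwanted contributions cancel, and matching what survives with the pipe dreams for $(ws_i)^{-1}$ is the entire content of the theorem. This is exactly what Knutson and Miller's mitosis recursion (or, in the original treatment, the Fomin--Stanley nilCoxeter-algebra argument) was invented to do, and none of that construction appears in your sketch. Moreover, the ladder moves of \cite{BergeronBilley.1993} connect the pipe dreams of a \emph{single fixed} permutation to one another; they do not by themselves relate the set for $w$ to the set for $ws_i$, so invoking them here does not close the induction. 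Finally, the local computation you rely on is incorrect as stated: for $a>b$ one has $\partial_i\bigl(x_i^a x_{i+1}^b\bigr) = (x_i x_{i+1})^b\, h_{a-b-1}(x_i,x_{i+1})$, not $h_{a+b-1}(x_i,x_{i+1})$. Until the orbit decomposition and cancellation argument are actually constructed, what you have is a plan for a proof, not a proof.
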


We may encode compatible sequences for the reduced words as increasing factorizations with an additional cutoff condition.

\begin{definition}
  Given a reduced word $\rho$, an \defn{increasing factorization with cutoff} is an increasing factorization such that in 
  addition the first entry in block $i$ from the right is at least $i$. 
  
  Given a permutation $w$, a \defn{reduced factorization with cutoff} for $w$ is an increasing factorization with cutoff 
  of a reduced word for $w$.
  \label{def:factor-cutoff}
\end{definition}

The set of reduced factorizations with cutoff is denoted by $\RFC(w)$. For example, the reduced factorizations with 
cutoff for $153264$ are shown in Figure~\ref{fig:factor-cutoff}.

\begin{figure}[ht]
  \begin{displaymath}
    \begin{array}{ccccccc}
      (45)(3)(23)() & (45)()(23)(2) & (4)(35)(23)() & (4)(3)(25)(3) & (4)()(235)(2) & (4)(3)(235)() & (4)()(23)(25) \\
      (45)(3)(2)(3) & (4)(5)(23)(2) & (4)(35)(2)(3) &               & ()(4)(235)(2) & (4)(3)(23)(5) & ()(4)(23)(25) \\
      (45)(3)()(23) & ()(45)(23)(2) & (4)(35)()(23) &               &               & (4)(3)(2)(35) &               \\
      (45)()(3)(23) &               & (4)(3)(5)(23) &               &               & (4)(3)()(235) &               \\
      (4)(5)(3)(23) &               & (4)()(35)(23) &               &               & (4)()(3)(235) &               \\
      ()(45)(3)(23) &               & ()(4)(35)(23) &               &               & ()(4)(3)(235) &                
    \end{array}
  \end{displaymath}
   \caption{\label{fig:factor-cutoff}The reduced factorizations with cutoff for $153264$.}
\end{figure}

The weight function on reduced factorizations provides a simple bijection between compatible sequences and 
increasing factorizations with cutoff for a reduced word. For example, compare Figure~\ref{fig:factor-cutoff} 
with Figure~\ref{fig:compatible}.

\begin{proposition}
  The Schubert polynomial $\schubert_w(x)$ is given by
  \begin{equation}
    \schubert_{w}(x) = \sum_{r \in \RFC(w^{-1})} x^{\wt(r)}.
    \label{e:schubert fac}
  \end{equation}
  \label{prop:schubert-RF}    
\end{proposition}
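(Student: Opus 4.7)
The plan is to combine Theorem~\ref{thm:schubert-RC} with an explicit bijection between compatible sequences for reduced words of $w^{-1}$ and elements of $\RFC(w^{-1})$ that preserves the weight monomial. The bijection is exactly the one suggested by the structure of both objects: a compatible sequence partitions the positions of $\rho$ into maximal constant runs whose values are weakly decreasing, and this is precisely the data of an increasing factorization of $\rho$ indexed from the right.

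Concretely, given a reduced word $\rho=\rho_1\cdots\rho_k \in R(w^{-1})$ and a $\rho$-compatible sequence $\alpha=\alpha_1\cdots\alpha_k$, I would form $r=r^\ell\cdots r^1$ by placing the letter $\rho_j$ into block $r^{\alpha_j}$ (the $\alpha_j$-th block from the right), where $\ell=\alpha_1$. I then need to verify three things: (i) each block is strictly increasing, (ii) $r$ has the cutoff property, and (iii) $r$ is a reduced factorization of the same word $\rho$. For (i), if $\alpha_j=\alpha_{j+1}$ then the contrapositive of the compatibility condition ``$\rho_j>\rho_{j+1}\Rightarrow\alpha_j>\alpha_{j+1}$'' gives $\rho_j\leqslant\rho_{j+1}$; since $\rho$ is reduced, $\rho_j\neq\rho_{j+1}$, and thus $\rho_j<\rho_{j+1}$, so the block is increasing. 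For (ii), the leftmost (and hence smallest) entry of block $r^i$ is some $\rho_j$ with $\alpha_j=i$, and by compatibility $\rho_j\geqslant\alpha_j=i$, so the cutoff condition holds. Property (iii) is immediate because we have not reordered the letters of $\rho$.

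For the inverse, given $r=r^\ell\cdots r^1\in\RFC(w^{-1})$, I would read off $\rho$ as the concatenation (left-to-right) of its blocks and define $\alpha_j=i$ when $\rho_j$ lies in block $r^i$. The weak-decrease of $\alpha$ is clear from the left-to-right block order; $\alpha_j\leqslant\rho_j$ holds because the first entry of block $i$ is $\geqslant i$ and the blocks are increasing, so every entry of block $i$ is $\geqslant i$; and the implication ``$\rho_j>\rho_{j+1}\Rightarrow\alpha_j>\alpha_{j+1}$'' holds because any strict decrease between consecutive letters must occur across a block boundary, since blocks are increasing. The two constructions are manifestly inverse to one another.

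Finally, weight preservation is a direct count: the number of indices $j$ with $\alpha_j=i$ equals the number of letters in block $r^i$, so $x^{\alpha}=\prod_i x_i^{|\{j\,:\,\alpha_j=i\}|}=\prod_i x_i^{\wt(r)_i}=x^{\wt(r)}$. Summing over $\rho$ and $\alpha$ then yields
\[
\schubert_w(x)=\sum_{\rho\in R(w^{-1})}\sum_{\alpha\in\mathrm{RC}(\rho)}x^{\alpha}=\sum_{r\in\RFC(w^{-1})}x^{\wt(r)},
\]
using Theorem~\ref{thm:schubert-RC} for the first equality. There is no serious obstacle here; the only point that requires a brief argument is the reducedness-based observation $\rho_j\neq\rho_{j+1}$, which turns the weak inequality coming from compatibility into the strict inequality needed for the increasing-block condition.
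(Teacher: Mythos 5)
Your argument is correct and is essentially the same as the paper's proof: both establish the weight-preserving bijection $\bigcup_{\rho}\mathrm{RC}(\rho)\to\RFC(w^{-1})$ by sending $\rho_j$ to the $\alpha_j$-th block from the right, using reducedness to upgrade the weak increase within blocks to a strict increase and the condition $\alpha_j\leqslant\rho_j$ for the cutoff. Your write-up just spells out the verification in more detail than the paper does.
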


\begin{proof}
  To prove that~\eqref{e:schubert fac} is equivalent to~\eqref{e:schubert}, we show that there is a bijection
  $\bigcup_{\rho \in R(w^{-1})} \mathrm{RC}(\rho) \to \RFC(w^{-1})$.
  Given a compatible sequence $\alpha$ for a reduced word $\rho$, the letter $\rho_i$ belongs to the $a$-th factor from
  the right if $\alpha_i=a$. Due to the condition that $\alpha_j> \alpha_{j+1}$ whenever $\rho_j>\rho_{j+1}$,
  the letters within each factor are weakly increasing. Since the word $\rho$ is reduced, the letters within each factor
  must actually be increasing. Furthermore, since $\alpha_j\leqslant \rho_j$, all letters in the $a$-th factor must be of
  value at least $a$. Conversely, given a reduced factorization with cutoff one can immediately construct the compatible 
  sequence $\alpha$ by setting $\alpha_j=a$ if $\rho_j$ is in factor $a$.
\end{proof}

Reduced factorizations have the advantage of tracking the reduced word along with the weight, making this a more 
natural indexing set for the crystal structure discussed in the next section.

%
\subsection{Weak Edelman--Greene correspondence}
\label{sec:EG-weak}

We recall a generalization of the Edelman--Greene correspondence~\cite{Ass-R} that gives the Demazure expansion 
of a Schubert polynomial, parallel to the Schur expansion of a Stanley symmetric function.

Following \cite{Ass-R}, for $P$ a semi-standard Young tableau with strictly increasing rows, define the 
\defn{lift of $P$}, denoted by $\mathrm{lift}(P)$, to be the tableau of key shape obtained by raising each entry in the 
first column of $P$ until it equals its row index, and, once columns $1$ through $c-1$ have been lifted, raising entries in 
column $c$ from top to bottom, maintaining their relative order, placing each entry in the highest available row such that 
there is an entry in column $c-1$ that is strictly smaller.

\begin{definition}[\cite{Ass-R}]
  For $\rho$ a reduced expression, define the \defn{weak insertion tableau} $\widehat{P}(\rho)$ by 
  $\widehat{P}(\rho) = \mathrm{lift}(P(\rho))$, where $P(\rho)$ is the insertion tableau under the Edelman--Greene
  insertion. In addition,  define the \defn{weak recording tableau} $\widehat{Q}(\rho)$ to be the unique standard key 
  tableau of the same key shape as $\widehat{P}(\rho)$ such that $\phi(\widehat{Q}(\rho)) = Q(\rho)$, where $Q(\rho)$ 
  is the Edelman--Greene recording tableau and $\phi$ is the column sorting map.
  \label{def:weak-EG}
\end{definition}

For example, Figure~\ref{fig:EG-weak} constructs the weak insertion tableau (top) and weak recording tableau 
(bottom) for the reduced expression $45232$. Compare this with Figure~\ref{fig:EG}.

\begin{figure}[ht]
  \begin{displaymath}
    \begin{array}{c@{\hskip\cellsize}c@{\hskip\cellsize}c@{\hskip\cellsize}c@{\hskip\cellsize}c@{\hskip\cellsize}c}
      \vline\tableau{\\ \\ \\ \\\hline} &
      \vline\tableau{4 \\ \\ \\ } &
      \vline\tableau{4 & 5\\ \\ \\ } &
      \vline\tableau{4 & 5\\ \\ 2 \\ } &
      \vline\tableau{4 & 5\\ \\ 2 & 3 \\ } &
      \vline\tableau{4 & 5\\ 3 \\ 2 & 3 \\ } \\ \\
      \vline\tableau{\\ \\ \\ \\\hline} &
      \vline\tableau{5 \\ \\ \\ } &
      \vline\tableau{5 & 4\\ \\ \\ } &
      \vline\tableau{5 & 4\\ \\ 3 \\ } &
      \vline\tableau{5 & 4\\ \\ 3 & 2 \\ } &
      \vline\tableau{5 & 4\\ 1 \\ 3 & 2 \\ } 
    \end{array}
  \end{displaymath}
  \caption{\label{fig:EG-weak}The weak insertion and recording tableaux for the reduced expression $45232$.}
\end{figure}

For $P$ a key tableau, define the \defn{drop of $P$}, denoted by $\mathrm{drop}(P)$, to be the Young tableau 
defined by letting the entries of $P$ fall in their columns while maintaining their relative order. It is clear that 
$\mathrm{drop}(\mathrm{lift}(P))=P$ for any $P$ of partition shape.

\begin{theorem}[\cite{Ass-R}]
  The weak Edelman--Greene correspondence $\rho \mapsto (\widehat{P}(\rho),\widehat{Q}(\rho))$ is a bijection between 
  reduced expressions and all pairs of tableaux $(P,Q)$ such that $P$ and $Q$ have the same key shape, $P$ 
  has increasing rows and columns with $\mathrm{row}(P)$ a reduced word and $\mathrm{lift}(\mathrm{drop}(P))=P$, 
  and $Q$ is a standard key tableau.
  \label{thm:weak-EG}
\end{theorem}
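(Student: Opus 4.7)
The plan is to derive Theorem~\ref{thm:weak-EG} from the classical Edelman--Greene correspondence (Theorem~\ref{thm:insert-EG}) by post-composing with $\mathrm{lift}$ on the $P$-side and with an inverse of the column sorting map $\phi$ on the $Q$-side. I would carry out the argument in three steps.

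First, I would check that $\mathrm{lift}$ and $\mathrm{drop}$ are mutually inverse bijections between, on one side, increasing tableaux $P$ of partition shape $\lambda$ whose row word is reduced, and on the other side, tableaux $\widehat{P}$ of some key shape $a$ with strictly increasing rows and columns, $\mathrm{row}(\widehat{P})$ reduced, and $\mathrm{lift}(\mathrm{drop}(\widehat{P}))=\widehat{P}$. The identity $\mathrm{drop}\circ\mathrm{lift}=\mathrm{id}$ on partition shapes is immediate from the definitions, since each column is preserved setwise and its relative order is unchanged. The identity $\mathrm{lift}\circ\mathrm{drop}=\mathrm{id}$ on the key side is exactly the fixed-point condition imposed in the statement. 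That $\mathrm{row}(\widehat{P})$ remains a reduced word follows because $\mathrm{lift}$ does not alter the column sets or their internal orderings, so the row words of $P$ and $\widehat{P}$ differ only by a permutation preserving each column.

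Second, I would show that for the partition shape $\lambda$ of $P(\rho)$ and the key shape $a$ of $\widehat{P}(\rho)$, the column sorting map $\phi$ restricts to a bijection from standard key tableaux of shape $a$ onto the set of standard Young tableaux of shape $\lambda$ arising as $Q(\rho)$ for reduced expressions whose weak insertion tableau has shape $a$. Injectivity of $\phi$ follows from Proposition~\ref{prop:K2Y-well} specialized to the setting where all entries are distinct. For existence, I would run the explicit inverse procedure described in the proof of Proposition~\ref{prop:K2Y-well} on $Q(\rho)$ with target shape $a$: because $Q(\rho)$ is standard and $a$ is prescribed by how $\mathrm{lift}$ redistributes the columns of $P(\rho)$, the greedy column-by-column filling selecting the smallest admissible entry at each step succeeds and lands in $\SSKT(a)$, yielding $\widehat{Q}(\rho)$. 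Finally, assembling the pieces, the weak Edelman--Greene map is the composition of the classical bijection $\rho\mapsto(P(\rho),Q(\rho))$ with $\mathrm{lift}$ on the first coordinate and $\phi^{-1}$ on the second, and its inverse sends an allowable pair $(P,Q)$ to the reduced expression obtained by applying inverse Edelman--Greene to $(\mathrm{drop}(P),\phi(Q))$; the fixed-point condition on $P$ and the injectivity of $\phi$ guarantee this round-trip recovers $(P,Q)$.

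The main obstacle is the second step: one must verify that the key shape $a$ produced by $\mathrm{lift}(P(\rho))$ is precisely the shape for which the column-sorting inverse succeeds on $Q(\rho)$. Put differently, the compatibility between how $\mathrm{lift}$ distributes entries of $P(\rho)$ into the rows of shape $a$ and how $\phi^{-1}$ distributes entries of $Q(\rho)$ into the rows of shape $a$ is the one substantive thing to check; once this is in place, bijectivity is a formal consequence of Theorem~\ref{thm:insert-EG}.
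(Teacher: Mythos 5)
The paper does not actually prove this theorem; it is imported from \cite[Theorem~5.16]{Ass-R} (see the proof of Corollary~\ref{cor:weak-factor}), so there is no internal proof to compare against. Judged on its own terms, your proposal has the right architecture --- reduce to Theorem~\ref{thm:insert-EG} by applying $\mathrm{lift}$ on the $P$-side and $\phi^{-1}$ on the $Q$-side --- but it leaves the entire mathematical content of the theorem unproven. The step you flag as ``the main obstacle'' is not a loose end to be dispatched in passing; it \emph{is} the theorem. Concretely, you must show that the standard Young tableau $Q(\rho)$ lies in the image of the column sorting map $\phi$ restricted to standard key tableaux of the specific shape $a = \operatorname{sh}(\mathrm{lift}(P(\rho)))$, and that the preimage is unique. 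The shape $a$ is computed entirely from the \emph{values} in the insertion tableau $P(\rho)$, whereas $Q(\rho)$ records the \emph{order} in which cells were created; there is no a priori reason these two tableaux should lift compatibly to the same key shape, and the inverse procedure sketched in Proposition~\ref{prop:K2Y-well} genuinely fails for tableaux outside the image of $\phi$ --- that failure is precisely what makes the Demazure truncation nontrivial. Establishing this compatibility requires analyzing how the Edelman--Greene insertion path interacts with $\mathrm{lift}$, which is the substance of the argument in \cite{Ass-R}; asserting that ``the greedy column-by-column filling succeeds'' is a restatement of what must be proved, not a proof.

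There is also a secondary gap in your first step. The map $\mathrm{lift}$ preserves column sets and their internal order, but it redistributes entries among rows, so $\mathrm{row}(\mathrm{lift}(P(\rho)))$ is a genuinely different word from $\mathrm{row}(P(\rho))$. The observation that the two words ``differ only by a permutation preserving each column'' does not by itself imply that the new word is reduced, nor even that it represents the same permutation; one needs to verify that the rearrangement is realized by commutation relations, i.e.\ that entries displaced into different rows by $\mathrm{lift}$ carry letters that are far apart. This is believable and routine, but it should be isolated and proved as a lemma rather than folded into a ``follows because.''
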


Analogous to the Edelman--Greene correspondence, this extends to a bijection between reduced factorizations 
with cutoff and all pairs of tableaux $(P,Q)$ such that $P$ and $Q$ have the same key shape, $P$ is increasing with 
$\mathrm{row}(P)$ a reduced word and $\mathrm{lift}(\mathrm{drop}(P))=P$, and $Q$ is a semi-standard key tableau. 
For example, the recording tableau for the reduced factorization $(4)(5)(23)(2)$ is constructed in 
Figure~\ref{fig:EG-weak-factor}.

\begin{figure}[ht]
  \begin{displaymath}
    \begin{array}{c@{\hskip\cellsize}c@{\hskip\cellsize}c@{\hskip\cellsize}c@{\hskip\cellsize}c@{\hskip\cellsize}c}
      \vline\tableau{\\ \\ \\ \\\hline} &
      \vline\tableau{4 \\ \\ \\ } &
      \vline\tableau{4 & 3\\ \\ \\ } &
      \vline\tableau{4 & 3\\ \\ 2 \\ } &
      \vline\tableau{4 & 3\\ \\ 2 & 2 \\ } &
      \vline\tableau{4 & 3\\ 1 \\ 2 & 2 \\ }
    \end{array}
  \end{displaymath}
  \caption{\label{fig:EG-weak-factor}The weak recording tableau for the reduced factorization $(4)(5)(23)(2)$.}
\end{figure}

\begin{corollary}
  The correspondence $r \mapsto (\widehat{P}(r),\widehat{Q}(r))$ is a weight-preserving bijection between 
  reduced factorizations and all pairs of tableaux $(P,Q)$ such that $P$ and $Q$ have the same key shape, $P$ 
  is increasing with $\mathrm{row}(P)$ a reduced word and $\mathrm{lift}(\mathrm{drop}(P))=P$, and $Q$ is a 
  semi-standard key tableau. 
  \label{cor:weak-factor}
\end{corollary}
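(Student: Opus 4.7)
The plan is to define the map $r \mapsto (\widehat P(r),\widehat Q(r))$, verify well-definedness of $\widehat Q(r)$ as an SSKT (with the row-index bound coming from the cutoff being the critical constraint), prove weight preservation, and exhibit an inverse, reducing throughout to Theorem~\ref{thm:weak-EG} for the reduced-expression case and Corollary~\ref{cor:insert-factor} for the semi-standard Young recording.

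Given $r \in \RFC(w)$ with underlying reduced word $\rho$, I would set $\widehat P(r) := \widehat P(\rho)$ and define $\widehat Q(r)$ as the unique filling of the key shape of $\widehat P(r)$ whose column sorting image equals the Edelman--Greene semi-standard Young recording tableau, i.e.\ $\phi(\widehat Q(r)) = Q(r)$. Uniqueness is immediate from Proposition~\ref{prop:K2Y-well}, and existence is obtained constructively by relabeling $\widehat Q(\rho)$: each standard entry $v$ is replaced by the index (from the right) of the block of $r$ containing the letter of $\rho$ corresponding to $v$ via the correspondence $\phi(\widehat Q(\rho)) = Q(\rho)$. The weakly decreasing row, distinct column, and column inversion conditions on $\widehat Q(r)$ transfer directly from those of $\widehat Q(\rho)$ once one observes that letters within a single block of $r$ are strictly increasing in word position and therefore occupy distinct columns of the relabeled tableau.

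The central step -- and the main obstacle -- is the remaining SSKT condition that no entry exceeds its row index, which I would show is equivalent to the cutoff condition on $r$. I expect the forward direction to be the more delicate: one must show that for each letter $x$ in block $i$ of $r$, the cell of $\widehat P(r)$ assigned label $i$ under the relabeling sits in row at least $i$. The cutoff gives $x \geqslant i$ for every letter of block $i$, and the argument proceeds by tracing the cell through EG insertion followed by the lift operation, using that column~$1$ of the lift places each entry at row equal to its value while subsequent columns place entries at the highest available row with col~$c-1$ strictly smaller. The reverse direction -- that a cutoff violation produces a row-index violation -- follows by reversing this analysis and exhibiting the offending cell.

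Weight preservation is then immediate from the construction, since the number of entries equal to $i$ in $\widehat Q(r)$ is the number of letters in block $i$ of $r$ from the right, which equals $\wt(r)_i$. The inverse map takes a valid pair $(P,Q)$, standardizes $Q$ in column reading order to a standard key tableau $\widehat Q_{\mathrm{std}}$, applies the inverse of Theorem~\ref{thm:weak-EG} to recover $\rho$, and groups the letters of $\rho$ into blocks according to the multiplicities of labels in $Q$. The resulting increasing factorization lies in $\RFC(w)$ precisely because of the row-index condition on $Q$, closing the bijection.
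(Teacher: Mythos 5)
Your route is genuinely different from the paper's: you attempt a direct construction of $\widehat{Q}(r)$ by relabelling $\widehat{Q}(\rho)$ and then verify the $\SSKT$ axioms and the cutoff/row-index equivalence by hand, whereas the paper deduces the semi-standard case from the standard case (Theorem~\ref{thm:weak-EG}) by a generating-function argument: it invokes \cite[Proposition~2.6]{Ass-H} (the fundamental slide polynomial of a standard key tableau is the sum of the monomials of the semi-standard key tableaux standardizing to it) together with \cite[Theorem~2.4]{Ass-R} (the slide polynomial of a reduced expression is the sum of the monomials of its compatible sequences), and then uses the bijection between compatible sequences and increasing factorizations with cutoff from Proposition~\ref{prop:schubert-RF}. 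Those two cited facts are precisely what make the fibers over a fixed $\rho$ match up, block-content by block-content.

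The problem is that your proposal leaves exactly that matching unproven. The assertion that the weakly decreasing row, distinct column, and column inversion conditions ``transfer directly'' under relabelling is not automatic: destandardizing a standard key tableau along an arbitrary grouping of its entries into blocks can violate the column inversion condition, and showing it does not for the groupings arising from increasing factorizations is nontrivial (it is essentially the content of \cite[Proposition~2.6]{Ass-H}). More seriously, your ``central step'' --- that the cutoff condition $x\geqslant i$ for letters of block $i$ is equivalent to the recorded cell landing in row at least $i$ of the lifted shape --- is stated as a plan (``tracing the cell through EG insertion followed by the lift operation'') rather than argued. The new cell created by Edelman--Greene insertion of $x$ generally contains a bumped value different from $x$ and need not sit in column $1$, so the lift's placement rule does not immediately control its row in terms of $x$; and the claim must be proved for the cell of $\widehat{Q}$, which is determined through $\phi^{-1}$ applied to $Q(\rho)$, adding a further layer of column sorting and complementation. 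Since this equivalence is the entire content of the corollary beyond the standard case already covered by Theorem~\ref{thm:weak-EG}, the proposal as written has a genuine gap at its core; either carry out the cell-tracing argument in detail or fall back on the slide-polynomial identities as the paper does.
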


\begin{proof}
  Theorem~\ref{thm:weak-EG} is proved in~\cite[Theorem~5.16]{Ass-R} using the standard key tableau. To get the 
  semi-standard case, we appeal to~\cite[Proposition~2.6]{Ass-H} where it is shown that the fundamental slide polynomial, 
  defined in~\cite{AS17}, associated to a standard key tableau is the sum of monomials associated to the semi-standard 
  key tableaux that standardize to it. As shown in~\cite[Theorem~2.4]{Ass-R}, the fundamental slide polynomial associated 
  to a reduced expression is the sum of monomials associated to the corresponding compatible sequences. The result 
  follows from the bijection between compatible sequences and increasing factorizations. 
\end{proof}

For example, the weak Edelman--Greene correspondence gives a weight-preserving bijection
\[ 
	\RFC(153264) \rightarrow \left( \raisebox{\cellsize}{$\vline\tableau{4 & 5\\ 3 \\ 2 & 3 \\ & }$} \times \SSKT(0,2,1,2) \right)
	 \bigcup \left( \raisebox{\cellsize}{$\vline\tableau{4 \\ 3 \\ 2 & 3 & 5 \\ & }$} \times \SSKT(0,3,1,1) \right). 
\]
In particular, we have the following expansion from \cite{Ass-R}.

\begin{corollary}[\cite{Ass-R}]
  The Schubert polynomial for $w$ may be expressed as
  \begin{equation}
    \schubert_{w}(x) = \sum_{T \in \mathrm{Yam}(w^{-1})} \key_{\wt(T)}(x),
  \end{equation}
  where $\mathrm{Yam}(w^{-1})$ is the set of increasing tableaux of key shape with $\mathrm{row}(P)$ a reduced 
  word for $w^{-1}$ and $\mathrm{lift}(\mathrm{drop}(P))=P$.
\end{corollary}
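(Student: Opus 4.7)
The plan is to chain together three facts that have already been established earlier in the excerpt: the Billey--Jockusch--Stanley formula in its reduced-factorization form (Proposition \ref{prop:schubert-RF}), the weak Edelman--Greene bijection (Corollary \ref{cor:weak-factor}), and the semi-standard key tableaux formula for Demazure characters (Theorem \ref{thm:key-SSKT}). The overall structure parallels exactly the derivation of the Schur expansion of $F_w$ from Edelman--Greene insertion, with $\SSKT$ playing the role of $\SSYT$ and key shapes playing the role of partition shapes.

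First I would write
\[
  \schubert_w(x) = \sum_{r \in \RFC(w^{-1})} x^{\wt(r)}
\]
by Proposition \ref{prop:schubert-RF}. Next I would apply the weak Edelman--Greene correspondence $r \mapsto (\widehat{P}(r), \widehat{Q}(r))$: by Corollary \ref{cor:weak-factor} this is a weight-preserving bijection from $\RFC(w^{-1})$ onto the set of pairs $(P,Q)$ in which $P$ is an increasing tableau of key shape with $\mathrm{row}(P)$ a reduced word for $w^{-1}$ and $\mathrm{lift}(\mathrm{drop}(P))=P$ (i.e.\ $P \in \mathrm{Yam}(w^{-1})$), and $Q \in \SSKT(\operatorname{sh}(P))$. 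The key point is that because the bijection is weight-preserving with the weight recorded entirely by $\widehat{Q}$, one may fiber the sum over the first coordinate:
\[
  \schubert_w(x) = \sum_{P \in \mathrm{Yam}(w^{-1})} \; \sum_{Q \in \SSKT(\operatorname{sh}(P))} x^{\wt(Q)}.
\]

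Finally, I would recognize the inner sum as $\key_{\operatorname{sh}(P)}(x)$ by Theorem \ref{thm:key-SSKT}, which yields
\[
  \schubert_w(x) = \sum_{P \in \mathrm{Yam}(w^{-1})} \key_{\operatorname{sh}(P)}(x),
\]
matching the statement under the convention (as in the analogous Schur expansion for $F_w$) that $\wt(T)$ in the corollary denotes the shape of $T$ viewed as a weak composition. Since all the substantive work is already packaged into Proposition \ref{prop:schubert-RF}, Corollary \ref{cor:weak-factor}, and Theorem \ref{thm:key-SSKT}, there is no real obstacle; the only care needed is to verify that the indexing is consistent -- in particular, that the $\ell$-cutoff on reduced factorizations corresponds precisely to the row-index bound defining $\SSKT(\operatorname{sh}(P))$ on the recording side, which is exactly what Corollary \ref{cor:weak-factor} asserts.
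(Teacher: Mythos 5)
Your proposal is correct and is exactly the derivation the paper intends: the corollary is stated as an immediate consequence of Proposition~\ref{prop:schubert-RF}, the weight-preserving bijection of Corollary~\ref{cor:weak-factor}, and Theorem~\ref{thm:key-SSKT}, with the inner sum over $\SSKT(\operatorname{sh}(P))$ collapsing to $\key_{\operatorname{sh}(P)}(x)$. Your observation that $\wt(T)$ in the statement should be read as $\operatorname{sh}(T)$ (the key shape as a weak composition) is also right, as the worked example $\schubert_{143625}=\key_{(0,2,1,2)}+\key_{(0,3,1,1)}$ confirms.
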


For example, we have
\[ 
	\schubert_{143625}(x) = \key_{(0,2,1,2)}(x) + \key_{(0,3,1,1)}(x).
\]

%
\subsection{Demazure crystal operators on reduced factorizations with cutoff}
\label{sec:RF-demazure}

Since $\RFC(w) \subseteq \RF^n(w)$ for $w\in S_n$, we can restrict the crystal operators $f_i$ and $e_i$ 
on reduced factorizations to $\RFC(w)$ by defining $f_i(r)$ as in Section~\ref{sec:RF-crystal} if $f_i(r) \in \RFC(w)$ 
and $f_i(r)=0$ otherwise and similarly for $e_i$. An example is given in Figure~\ref{fig:RC-143625}. 

We will show in this section that this amounts to a union of Demazure crystal 
structures. We begin with an analog of Theorem~\ref{theorem.crystal RF}.

\begin{theorem} 
\label{theorem.weak EG}
  Given $r\in \RFC(w)$ for $w\in S_n$, denote by $\widehat{P}(r)$ the weak Edelman--Greene insertion tableau and 
  by $\widehat{Q}(r)$ the weak Edelman--Greene recording tableau, where letters in block $i$ of $r$ are recorded by 
  the letter $i$. Then, if $e_i(r) \neq 0$, we have $\widehat{P}(e_i(r)) = \widehat{P}(r)$ and 
  $\widehat{Q}(e_i(r)) = e_i(\widehat{Q}(r))$ for $1\leqslant i<n$.
\end{theorem}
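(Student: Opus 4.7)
The plan is to deduce the theorem directly from Theorem~\ref{theorem.crystal RF} by pushing that result through the column sorting map $\phi$ of Definition~\ref{def:K2Y}. Since $\RFC(w) \subseteq \RF^n(w)$ and the restricted crystal operator on $\RFC(w)$ is obtained by applying $e_i$ from $\RF^n(w)$ and then checking the cutoff condition, whenever $r \in \RFC(w)$ and $e_i(r) \in \RFC(w)$ we still have $P(e_i(r)) = P(r)$ and $Q(e_i(r)) = f_{n-i}(Q(r))$ by Theorem~\ref{theorem.crystal RF}.

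For the insertion tableau, the identity $\widehat{P}(e_i(r)) = \widehat{P}(r)$ is immediate: by Definition~\ref{def:weak-EG}, $\widehat{P}(r) = \mathrm{lift}(P(r))$, so the Edelman--Greene identity transfers at once under $\mathrm{lift}$. This handles the first half of the claim with essentially no work.

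For the recording tableau, I would chase $Q(e_i(r)) = f_{n-i}(Q(r))$ through $\phi$. By Definition~\ref{def:weak-EG} and Corollary~\ref{cor:weak-factor}, $\phi(\widehat{Q}(r)) = Q(r)$ and $\phi(\widehat{Q}(e_i(r))) = Q(e_i(r))$. By Lemma~\ref{lem:commute}, $\phi$ intertwines $e_i$ on key tableaux with $f_{n-i}$ on Young tableaux, so
\[
  \phi(e_i(\widehat{Q}(r))) = f_{n-i}(\phi(\widehat{Q}(r))) = f_{n-i}(Q(r)) = Q(e_i(r)) = \phi(\widehat{Q}(e_i(r))).
\]
Because the key-tableau raising operator preserves the key shape by Definition~\ref{def:key-raise} and Proposition~\ref{prop:key-well}, and because the first half of the theorem forces $\widehat{Q}(e_i(r))$ to share the shape of $\widehat{Q}(r)$, both tableaux lie in the same $\SSKT(a)$, and the injectivity of $\phi$ from Proposition~\ref{prop:K2Y-well} yields $\widehat{Q}(e_i(r)) = e_i(\widehat{Q}(r))$.

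The only slightly delicate bookkeeping is verifying that $e_i(\widehat{Q}(r))$ is itself nonzero, so that both sides of the key identity are legitimate elements of $\SSKT(a)$ on which $\phi$ is defined. This is guaranteed by the equivalence in Lemma~\ref{lem:commute}: since $f_{n-i}(Q(r)) = Q(e_i(r)) \neq 0$, the raising operator $e_i$ on $\widehat{Q}(r)$ is necessarily nonzero as well, and Proposition~\ref{prop:key-well} then ensures that its output stays in $\SSKT(a)$. I expect no substantive obstacle beyond this, since the main combinatorial content has been pre-packaged in Theorem~\ref{theorem.crystal RF} on the Young-tableau side and Lemma~\ref{lem:commute} on the key-tableau side; the present theorem is essentially the statement that these two prior results are compatible under the weak Edelman--Greene correspondence.
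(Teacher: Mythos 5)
Your proposal is correct and follows essentially the same route as the paper's own proof: invoke Theorem~\ref{theorem.crystal RF} for the Edelman--Greene identities, transfer the $P$-statement through $\mathrm{lift}$, and transfer the $Q$-statement through $\phi$ via Lemma~\ref{lem:commute}. The extra care you take with injectivity of $\phi$, shape preservation, and nonvanishing of $e_i(\widehat{Q}(r))$ only makes explicit steps the paper leaves implicit.
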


\begin{proof}
By Theorem~\ref{theorem.crystal RF} we have $P(e_i(r))=P(r)$ and 
$Q(e_i(r)) = f_{n-i}(Q(r))$, where $P$ and $Q$ are the Edelman--Greene insertion and recording tableaux, respectively.
By Definition~\ref{def:weak-EG}, we have $\widehat{P}(r)=\mathrm{lift}(P(r))$, which proves $\widehat{P}(e_i(r)) 
= \widehat{P}(r)$. Again by Definition~\ref{def:weak-EG}, we have $\phi(\widehat{Q}(r)) = Q(r)$. By 
Lemma~\ref{lem:commute}, we have $\phi(e_i \widehat{Q}(r)) = f_{n-i} \phi(\widehat{Q}(r))
=f_{n-i} Q(r)$, proving that $\widehat{Q}(e_i(r)) = e_i(\widehat{Q}(r))$.
\end{proof}

\begin{figure}[t]
  \begin{displaymath} \begin{array}{cccccc}
      & & \rnode{b1}{()(4)(35)(23)} & & \\[\cellsize]
      & & \rnode{b2}{()(45)(3)(23)} & & \rnode{c2}{(4)()(35)(23)} & \\[\cellsize]
      \rnode{a3}{()(45)(23)(2)} & & & \rnode{c3}{(4)(5)(3)(23)} & \rnode{C3}{(4)(3)(5)(23)} & \\[\cellsize]
      & \rnode{b4}{(4)(5)(23)(2)} & \rnode{B4}{(4)(3)(25)(3)} & & \rnode{c4}{(4)(35)()(23)} & \rnode{d4}{(45)()(3)(23)} \\[\cellsize]
      & & \rnode{b5}{(4)(35)(2)(3)} & \rnode{c5}{(45)()(23)(2)} & & \rnode{d5}{(45)(3)()(23)} \\[\cellsize]
      \rnode{a6}{(4)(35)(23)()} & & & \rnode{c6}{(45)(3)(2)(3)} & & \\[\cellsize]
      & \rnode{b7}{(45)(3)(23)()} & & & & \\
      & & & \rnode{xc1}{()(4)(3)(235)} & & \\[\cellsize]
      & & \rnode{xb2}{()(4)(23)(25)} & & \rnode{xd2}{(4)()(3)(235)} & \\[\cellsize]
      & \rnode{xa3}{()(4)(235)(2)} & & \rnode{xc3}{(4)()(23)(25)} & \rnode{xd3}{(4)(3)()(235)} & \\[\cellsize]
      & & \rnode{xb4}{(4)()(235)(2)} & \rnode{xc4}{(4)(3)(2)(35)} & & \\[\cellsize]
      & & \rnode{xb5}{(4)(3)(23)(5)} & & & \\[\cellsize]
      & \rnode{xa6}{(4)(3)(235)()} & & & &
    \end{array}
    \psset{nodesep=2pt,linewidth=.1ex}
    \ncline[linecolor=red]{<-}  {b1}{b2} 
    \ncline[linecolor=green]{<-}{b1}{c2} 
    \ncline[linecolor=blue]{<-} {b2}{a3} 
    \ncline[linecolor=red]{<-}  {c2}{C3} 
    \ncline[linecolor=green]{<-}{b2}{c3} 
    \ncline[linecolor=blue]{<-} {c3}{b4} 
    \ncline[linecolor=blue]{<-} {C3}{B4} 
    \ncline[linecolor=red]{<-}  {C3}{c4} 
    \ncline[linecolor=green]{<-}{a3}{b4} 
    \ncline[linecolor=green]{<-}{c3}{d4} 
    \ncline[linecolor=blue]{<-} {c4}{b5} 
    \ncline[linecolor=blue]{<-} {d4}{c5} 
    \ncline[linecolor=red]{<-}  {B4}{b5} 
    \ncline[linecolor=red]{<-}  {d4}{d5} 
    \ncline[linecolor=green]{<-}{b4}{c5} 
    \ncline[linecolor=green]{<-}{c4}{d5} 
    \ncline[linecolor=blue]{<-} {b5}{a6} 
    \ncline[linecolor=blue]{<-} {d5}{c6} 
    \ncline[linecolor=green]{<-}{b5}{c6} 
    \ncline[linecolor=blue]{<-} {c6}{b7} 
    \ncline[linecolor=green]{<-}{a6}{b7} 
    \ncline[linecolor=blue]{<-} {xc1}{xb2} 
    \ncline[linecolor=green]{<-}{xc1}{xd2} 
    \ncline[linecolor=blue]{<-} {xb2}{xa3} 
    \ncline[linecolor=green]{<-}{xb2}{xc3} 
    \ncline[linecolor=blue]{<-} {xd2}{xc3} 
    \ncline[linecolor=red]{<-}  {xd2}{xd3} 
    \ncline[linecolor=green]{<-}{xa3}{xb4} 
    \ncline[linecolor=blue]{<-} {xc3}{xb4} 
    \ncline[linecolor=blue]{<-} {xd3}{xc4} 
    \ncline[linecolor=blue]{<-} {xc4}{xb5} 
    \ncline[linecolor=blue]{<-} {xb5}{xa6} 
  \end{displaymath}
  \caption{\label{fig:RC-143625}The Demazure crystal structure on $\RFC(153264)$, with edges $e_1 
  \color{blue}\nearrow$, $e_2 \color{red}\uparrow$, $e_3 \color{green}\nwarrow$.}
\end{figure}

By Proposition~\ref{prop:schubert-RF}, combinatorial objects underlying the Schubert polynomials
$\schubert_{w^{-1}}(x)$ are the reduced factorizations with cutoff $\RFC(w)$. On the other hand,
$\RF^n(w)$ are combinatorial objects underlying the Stanley symmetric polynomials $F_{w^{-1}}(x)$
by Definition~\ref{def:stanley}. By Theorem~\ref{theorem.crystal}, there is a crystal structure
on $\RF^n(w)$. Now we show that $\RFC(w)$ admits a Demazure crystal structure.

\begin{theorem}
\label{theorem.main}
	The operators $f_i$ and $e_i$ for $1\leqslant i<n$ define a Demazure crystal structure on
	$\RFC(w)$. More precisely,
	\[
		\RFC(w) \cong \bigcup_{\substack{r\in \RFC(w)\\ e_i r = 0 \quad \forall 1\leqslant i<n}}
		B_{w(r)}(\wt(r)),
	\]
	where $w(r)$ is the shortest permutation that sorts $\operatorname{sh}(\widehat{P}(r))$.
\end{theorem}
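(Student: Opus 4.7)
The strategy is to transport the Demazure crystal structure on key tableaux from Theorem~\ref{theorem.key demazure} to $\RFC(w)$ via the weak Edelman--Greene correspondence of Corollary~\ref{cor:weak-factor}. That corollary already gives a weight-preserving bijection
\[
	\RFC(w) \;\longleftrightarrow\; \bigsqcup_{P} \{P\} \times \SSKT(\operatorname{sh}(P)),
\]
where the disjoint union ranges over the weak insertion tableaux $P$ arising from reduced factorizations for $w$. This partitions $\RFC(w)$ into EG-fibers, one for each $P$, each naturally in bijection with semi-standard key tableaux of a fixed key shape.

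Next I would check that the restricted crystal operators $e_i,f_i$ on $\RFC(w)$ match the key tableau crystal operators on $\SSKT(\operatorname{sh}(P))$ under this bijection. Theorem~\ref{theorem.weak EG} already gives the intertwining for $e_i$: $\widehat{P}(e_i r) = \widehat{P}(r)$ and $\widehat{Q}(e_i r) = e_i \widehat{Q}(r)$. The corresponding statement for $f_i$ then follows because $e_i$ and $f_i$ are partial inverses on both sides (by Proposition~\ref{proposition.f intertwine key} on the key side, and by Theorem~\ref{theorem.crystal} on the $\RFC$ side): if $f_i^{\SSKT}(\widehat{Q}(r)) = T'$ and $r' \in \RFC(w)$ is the element with $\widehat{Q}(r')=T'$ and $\widehat{P}(r')=\widehat{P}(r)$, then $e_i(r')=r$ by $e_i$-intertwining, so the restricted $f_i(r)=r'$; conversely, if the restricted $f_i(r)=r'\neq 0$, then $e_i(r')=r$ in $\RFC(w)$, and the $e_i$-intertwining forces $\widehat{Q}(r') = f_i^{\SSKT}(\widehat{Q}(r))$.

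Applying Theorem~\ref{theorem.key demazure} then identifies each $\SSKT(\operatorname{sh}(P))$ with the Demazure crystal $B_{w_P}(\lambda_P)$, where $\lambda_P$ is the partition rearrangement of $\operatorname{sh}(P)$ and $w_P$ is the shortest permutation sorting $\operatorname{sh}(P)$ to $\lambda_P$. Each fiber contains a unique highest-weight element, which pulls back to the unique $r_0 \in \RFC(w)$ with $\widehat{P}(r_0)=P$ and $e_i r_0 = 0$ for all $i$. Weight preservation of the bijection gives $\wt(r_0)=\lambda_P$, and by construction $w(r_0)=w_P$. Summing the fiber decomposition over the highest-weight elements yields the claimed decomposition.

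The main obstacle is the second step, the $f_i$-intertwining. Theorem~\ref{theorem.weak EG} explicitly covers only $e_i$, and the cutoff condition on $\RFC(w)$ genuinely blocks certain $f_i$-arrows present in the full crystal $\RF^n(w)$ (for instance, by producing a letter in a block below its cutoff threshold). The argument sketched above converts the statement for $f_i$ into a uniqueness assertion for partial inverses, which is conceptually clean; one could alternatively verify it directly by combining Theorem~\ref{theorem.crystal RF} with Lemma~\ref{lem:commute} and the defining identity $\phi(\widehat{Q}(r))=Q(r)$, reducing the matching between $\RFC(w)$ and $\SSKT$ crystals to the compatibility of the column-sorting map $\phi$ with the two crystal correspondences.
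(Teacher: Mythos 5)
Your proposal is correct and follows essentially the same route as the paper: the published proof is just the two-sentence observation that Theorem~\ref{theorem.weak EG} transports the crystal structure on $\RFC(w)$ to the key tableau crystal via weak Edelman--Greene insertion, and that Theorem~\ref{theorem.key demazure} identifies the latter with a Demazure crystal. Your extra care with the $f_i$-intertwining and the fiber-by-fiber decomposition via Corollary~\ref{cor:weak-factor} fills in details the paper leaves implicit, but it is not a genuinely different argument.
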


\begin{proof}
By Theorem~\ref{theorem.weak EG}, the crystal operators on reduced factorizations under weak Edelman--Greene
insertion intertwine with the crystal operators on key tableaux. On the other hand, by Theorem~\ref{theorem.key demazure}
the crystal operators on key tableaux form a Demazure crystal.
\end{proof}

For example, the highest weight elements in $\RFC(153264)$ are $()(4)(35)(23)$ and $()(4)(3)(235)$
(see Figure~\ref{fig:RC-143625}), so that as Demazure crystals
\[
	\RFC(153264) \cong B_{s_1s_3s_2s_3}(2,2,1) \cup B_{s_1s_2s_3}(3,1,1).
\]

\begin{corollary}
  The Schubert polynomial for $w\in S_n$ may be expressed as
  \begin{equation}
    \schubert_{w}(x) = \sum_{\substack{r \in \RFC(w^{-1})\\ e_i r = 0 \quad \forall 1\leqslant i < n}} 
    \key_{\operatorname{sh}(\widehat{P}(r))}(x).
  \end{equation}
\end{corollary}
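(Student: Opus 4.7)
The plan is to derive this corollary as an immediate consequence of the Demazure crystal decomposition in Theorem~\ref{theorem.main}, combined with the combinatorial formula for Schubert polynomials in Proposition~\ref{prop:schubert-RF} and the character interpretation of Demazure crystals from Section~\ref{sec:demazure crystal}.

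First, I would apply Proposition~\ref{prop:schubert-RF} to write
\[
\schubert_w(x) = \sum_{r \in \RFC(w^{-1})} x^{\wt(r)}.
\]
Then, by Theorem~\ref{theorem.main}, the set $\RFC(w^{-1})$ decomposes as a disjoint union of Demazure crystals
\[
\RFC(w^{-1}) \cong \bigsqcup_{\substack{r \in \RFC(w^{-1})\\ e_i r = 0 \ \forall 1\leqslant i<n}} B_{w(r)}(\wt(r)).
\]
Since the operators $f_i$ and $e_i$ on $\RFC(w^{-1})$ preserve the weight decomposition in the expected way (they shift weight by $\alpha_i - \alpha_{i+1}$ on the relevant coordinates), summing $x^{\wt(r')}$ over $r' \in B_{w(r)}(\wt(r))$ yields precisely the character of that Demazure crystal.

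Next, I would identify the Demazure character. By the theorem of Littelmann and Kashiwara recalled at the end of Section~\ref{sec:demazure crystal}, we have $\operatorname{ch} B_v(\lambda) = \key_a$ whenever $v \cdot a = \lambda$. Here the highest weight of the crystal is $\wt(r)$, which is a partition since $r$ is a highest weight element (the $\RFC(w^{-1})$ crystal operators agree with those of $\RF^n(w^{-1})$ whenever both are nonzero, and Edelman--Greene insertion sends Stanley highest weight elements to Yamanouchi recording tableaux of partition shape). By construction, $\widehat{P}(r) = \mathrm{lift}(P(r))$ has key shape $\operatorname{sh}(\widehat{P}(r))$ whose column-sorted rearrangement is $\operatorname{sh}(P(r)) = \wt(r)$. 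By definition of $w(r)$ in Theorem~\ref{theorem.main} as the shortest permutation sorting $\operatorname{sh}(\widehat{P}(r))$ to $\wt(r)$, we obtain $w(r) \cdot \operatorname{sh}(\widehat{P}(r)) = \wt(r)$, and therefore
\[
\operatorname{ch} B_{w(r)}(\wt(r)) = \key_{\operatorname{sh}(\widehat{P}(r))}(x).
\]

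Assembling these pieces gives the claimed expansion. The only nontrivial step is the identification $w(r) \cdot \operatorname{sh}(\widehat{P}(r)) = \wt(r)$ for highest weight $r$, i.e., that the weight of the highest weight element coincides with the partition rearrangement of the key shape produced by weak Edelman--Greene insertion; this is essentially a bookkeeping check using that $\mathrm{drop}(\widehat{P}(r)) = P(r)$ and that Stanley highest weight elements correspond under Edelman--Greene insertion to Yamanouchi recording tableaux. With this identification in hand, the corollary follows by comparing characters term by term.
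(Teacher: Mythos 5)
Your derivation is correct and follows exactly the route the paper intends: the corollary is stated there without proof as an immediate consequence of Proposition~\ref{prop:schubert-RF}, the Demazure decomposition of Theorem~\ref{theorem.main}, and the Kashiwara--Littelmann character formula $\operatorname{ch}B_v(\lambda)=\key_a$ for $v\cdot a=\lambda$, which is precisely the chain you assemble. Your added check that $w(r)\cdot\operatorname{sh}(\widehat{P}(r))=\wt(r)$ for highest weight $r$ (i.e.\ that $\wt(r)$ is the partition rearrangement of the key shape produced by weak Edelman--Greene insertion) correctly fills in the one step the paper leaves implicit.
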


%
%

\bibliographystyle{amsalpha} 
\bibliography{demazure}

\end{document}